\renewcommand{\paragraph}{\roman{paragraph}}
\tikzstyle arrowstyle=[scale=1]
\tikzstyle directed=[postaction={decorate,decoration={markings, mark=at position .65 with {\arrow[arrowstyle]{stealth}}}}]
\tikzstyle reverse directed=[postaction={decorate,decoration={markings, mark=at position .65 with {\arrowreversed[arrowstyle]{stealth};}}}]
\newtheorem{theorem}{Theorem}[section]
\newtheorem{corollary}[theorem]{Corollary}
\newtheorem{definition}[theorem]{Definition}
\newtheorem{lemma}[theorem]{Lemma}
\newenvironment{proof}{\noindent {\bf Proof.}}{\rule{3mm}{3mm}\par\medskip}
\begin{document}

\title{On extremal spectral radius of blow-up uniform hypergraphs\thanks{Supported by National Natural Science Foundation of China (12171002,
11871073) and Anhui Provincial Natural Science Foundation (No. 2108085MA02).}}
\author{Shao-Han Xu, Fu-Tao Hu\footnote{Corresponding author:
hufu@ahu.edu.cn.}, Yi Wang\\
{\small Center for Pure Mathematics, School of mathematical Sciences, Anhui University, Hefei 230601, China}
}
\date{}
\maketitle
\begin{abstract}
Let $G$ be an $r$-uniform hypergraph of order $t$ and $\rho(G)$ is the spectral radius of $\mathcal{A}(G)$,
where $\mathcal{A}(G)$ is the adjacency tensor of $G$. A blow-up of $G$ respected to a positive integer vector  $(n_{1}, n_{2},\ldots,n_{t})$, denoted by $G \circ (n_{1}, n_{2},\ldots,n_{t})$,  is an $r$-uniform hypergraph obtained from $G$ by replacing each vertex $j$ of $G$
with a class of vertices $V_{j}$ of size $n_{j}\ge 1$ and if $\{j_{1},j_{2},\ldots,j_{r}\}\in E(G)$,
then $\{v_{i_1},v_{i_2},\ldots,v_{i_r}\}\in E(H)$ for every $v_{i_{1}}\in V_{j_{1}}, v_{i_{2}}\in V_{j_{2}},\ldots, v_{i_{r}}\in V_{j_{r}}$.
Let  $\mathcal{B}_{n}(G)$ be the set of all the blow-ups of $G$ such that each $n_i\ge 1$ and $\sum_{i=1}^n n_i=n$.
Let $K_{t}^{r}$  be the complete $r$-uniform hypergraph of order $t$, and let $SH(m,q,r)$ be the $r$-uniform sunflower hypergraph with $m$ petals and a kernel of size $r-q$ on $t$ vertices. For any $H\in \mathcal{B}_{n}(K_{t}^{r})$, we prove that
$$\rho(K_{t}^{r}\circ(n-t+1,1,1,\ldots,1))\leq\rho(H)\leq \rho (T_{t}^{r}(n)),$$
with the left equality holds if and only if $H\cong K_{t}^{r}\circ(n-t+1,1,1,\ldots,1)$, and the right equality holds if and only if  $H\cong T_{t}^{r}(n)$, where $T_{t}^{r}(n)$ is the complete $t$-partite $r$-uniform hypergraph of order $n$, with parts of size $\lfloor n / k\rfloor$ or $\lceil n / k \rceil$. For any $H\in \mathcal{B}_{n}(H(m,q,r))$, we determine the exact value of the spectral radius of $H$
and characterize the hypergraphs with maximum spectral radius and minimum spectral radius in $\mathcal{B}_{n}(H(m,q,r))$, respectively.
\end{abstract}

{\bf Keywords:}\ Hypergraph; Spectral radius; Blow-up; Adjacency tensor; Complete hypergraph; Sunflower hypergraph

{\bf AMS Subject Classification: }\ 05C65, 15A69, 15A18

\section{Introduction}

In recent years, the study of spectral hypergraph theory via tensors has attracted extensive attention.
An important topic in spectral hypergraph theory is to characterize hypergraphs with extremal
spectral radius for a given class of hypergraphs.
In this paper, we consider two classes of blow-ups of $r$-uniform hypergraphs.

Let $G$ be an $r$-uniform hypergraph with vertex set $V(G)=[t]:=\{1,2,\ldots,t\}$,
and let $(n_{1}, n_{2},\ldots,n_{t})$ be a positive integer vector. The {\it blow-up} of $G$,
denoted by $G\circ(n_{1}, n_{2},\ldots,n_{t})$, is an $r$-uniform hypergraph
obtained from $G$ by replacing each vertex $j\in [t]$ with a class of vertices $V_{j}$ of size $n_{j}\ge 1$ and
if $\{j_{1},j_{2},\ldots,j_{r}\}\in E(G)$, then $\{v_{i_1},v_{i_2},\ldots,v_{i_r}\}\in E(G\circ(n_{1}, n_{2},\ldots,n_{t}))$
for every $v_{i_{1}}\in V_{j_{1}}, v_{i_{2}}\in V_{j_{2}},\ldots, v_{i_{r}}\in V_{j_{r}}$.
Let   $\mathcal{B}_{n}(G)$ be the set of $r$-uniform hypergraphs $G\circ(n_{1}, n_{2},\ldots,n_{t})$,
where $(n_{1}, n_{2},\ldots,n_{t})$ takes over all positive integer vectors with $\sum_{i=1}^{t}n_{i}=n$.

Denote by $K_{t}$ the complete graph of order $t$. Stevanovi\'{c},
Gutman and Rehman \cite{D2} determined the maximum spectral radius and the minimum spectral
radius over all graphs in $\mathcal{B}_{n}(K_{t})$. The corresponding extremal graphs are the $t$-bipartite Tur\'{a}n graph $T_{t}(n)=K_{t}\circ(\lceil\frac{n}{t}\rceil,\lceil\frac{n}{t}\rceil,\ldots,\lfloor\frac{n}{t}\rfloor,\lfloor\frac{n}{t}\rfloor)$ and $K_{t}\circ(n-t+1,1,1,\ldots,1)$, respectively. Recently, Lou and Zhai \cite{Z}, Monsalve and Rada \cite{J1},
Sun and Das \cite{S}, Zhai  et al. \cite{M} studied the extremal spectral radius problem related to blow-up graphs.

The spectral radius of an $r$-uniform hypergraph refers to the spectral radius of its adjacency tensor.
In this paper, we consider the extremal spectral radius problem for $r$-uniform hypergraphs.
An $r$-uniform hypergraph on $t$ vertices having all possible edges is called a {\it complete
$r$-uniform hypergraph} and is denoted by $K_{t}^{r}$. An $r$-uniform hypergraph is called
{\it $t$-partite} if its vertex set can be  partitioned into $t$ sets so that each
edge contains at most one vertex from each set. An edge maximum $t$-partite $r$-uniform hypergraph
is called {\it complete  $t$-partite}.  We write $T_{t}^{r}(n)$ for the complete $t$-partite $r$-uniform
hypergraph of order $n$, with parts of size $\lfloor\frac{n}{t}\rfloor$ or $\lceil\frac{n}{t}\rceil$.
Note that  $T_{t}^{2}(n)$ is the {\it Tur\'{a}n graph} $T_{t}(n)$. Obviously, a complete $t$-partite
$r$-uniform hypergraph is a blow-up of a complete $r$-uniform hypergraph $K_{t}^{r}$. In  \cite{L},
Kang, Nikiforov and Yuan determined the $r$-uniform hypergraph with maximum $p$-spectral radius in all
$t$-partite $r$-uniform hypergraphs of order $n$ is $T_{t}^{r}(n)$. This indicates that the maximum
spectral radius in $\mathcal{B}_{n}(K_{t}^{r})$ when $p=r$. In Section $3$, we determine the minimum spectral
radius over all hypergraphs  in $\mathcal{B}_{n}(K_{t}^{r})$, which generalizes the result of Stevanovi\'{c},
Gutman and  Rehman in \cite{D2} for simple graphs.

Let $m>0$, and $q,r$ satisfy $0<q<r$. A {\it sunflower} hypergraph $SH(m,q,r)$ \cite{c15} is defined as follows.
Let $X$ be a set of $r-q$ vertices (``kernel") and define $m$ disjoint sets $\{Y_{i}\}_{i=1}^{m}$ of $q$ vertices each (``petals").
The edges of the hypergraph are the sets $X\cup Y_{i}$ for  $1\leq i\leq m$.
When $r=2$, sunflower graphs are normally referred to as {\it stars}.
When $q=1$, $S (m, 1, r)$ is a complete {\it $r$-cylinder} (i.e. complete $r$-partite, $r$-uniform hypergraph)
with parts sizes $m$ and $1^{k-1}$, which the authors considered in~\cite{J}.
When $q=r-1$, $S (m, r-1, r)$ is a {\it hyperstar} which the authors considered in~\cite{hqs13}.

Several studies on sunflower hypergraphs can be seen in \cite{DDD,III,II}.  Let $G$ be an $r$-uniform hypergraph, $p\ge 1$ and $k\ge 1$.
Nikiforov in~\cite{V} gave the exact value of the $p$-spectral radius of $G\circ (k,k,\ldots,k)$, that is $\lambda^{(p)}(G\circ (k,k,\ldots,k))=k^{r-r/p}\lambda^{(p)}(G)$. In Section $4$, we study the spectral radius of blow-ups of $SH(m,q,r)$.
We give the exact value of the spectral radius of $SH(m,q,r)\circ(n_{1}, n_{2},\ldots,n_{t})$
and characterize the hypergraphs with maximum spectral radius and minimum spectral radius in $\mathcal{B}_{n}(SH(m,q,r))$.

\section{Preliminaries}

\subsection{Eigenvalues of tensors}

A {\it real tensor} (also called {\it hypermatrix}) $\mathcal{T}=(t_{i_{1}i_{2}\cdots i_{r}})$ of order $r$ and dimension $n$ refers to a multi-dimensional array with entries $t_{i_{1}i_{2}\cdots i_{r}}\in\mathbb{R}$ for all $i_{j}\in [n]$ and $j\in[r]$. Obviously, if $r=2$, then $\mathcal{T}$ is a square matrix of dimension $n$.  The tensor $\mathcal{T}$ is called {\it symmetric} if its entries are invariant under any permutation of their indices. Given a vector $\mathbf{x}\in \mathbb{C}^{n}$, $\mathcal{T} \mathbf{x}^{r}\in\mathbb{C}$ and  $\mathcal{T}\mathbf{x}^{r-1} \in\mathbb{C}^{n}$, which are defined as follows:

$$\mathcal{T}\mathbf{x}^{r}=\sum_{i_{1},i_{2},\ldots,i_{r}\in [n]}t_{i_{1}i_{2}\cdots i_{r}}x_{i_{1}}x_{i_{2}}\cdots x_{i_{r}},$$
$$(\mathcal{T}\mathbf{x}^{r-1})_{i}=\sum_{i_{2},\ldots,i_{r}\in [n]}t_{ii_{2}\cdots i_{r}}x_{i_{2}}\cdots x_{i_{r}}, \;i\in [n].$$
Let $\mathcal{I}=(i_{i_{1}i_{2}\cdots i_{r}})$ be the identity tensor of order $r$ and dimension $n$, that is, $i_{i_{1}i_{2}\cdots i_{r}}=1$ if  $i_{1}=i_{2}=\cdots=i_{r}\in[n]$ and $i_{i_{1}i_{2}\cdots i_{r}}=0$ otherwise.
In 2005, Qi~\cite{L1} and Lim~\cite{Li} independently introduced the concept of eigenvalues for tensors as follows.

\begin{definition}
Let $\mathcal{T}$ be a real tensor of order $r$ dimension $n$. For some $\lambda\in\mathbb{C}$, if the polynomial system $(\lambda\mathcal{I}-\mathcal{T})\mathbf{x}^{r-1}=0$, or equivalently $\mathcal{T}\mathbf{x}^{r-1}=\lambda \mathbf{x}^{[r-1]}$, has a solution $\mathbf{x}\in \mathbb{C}^{n}\backslash\{0\}$, then $\lambda$ is called an eigenvalue of $\mathcal{T}$ and $\mathbf{x}$ is an eigenvector of $\mathcal{T}$ associated with $\lambda$, where $\mathbf{x}^{[r-1]}:=(x_{1}^{r-1},x_{2}^{r-1},\ldots,x_{n}^{r-1})\in\mathbb{C}^{n}$.
\end{definition}

The {\it spectral radius} of $\mathcal{T}$ is defined as $\rho(\mathcal{T})=\max\{|\lambda|:\lambda\;\;\rm is\;\; an\;\; eigenvalue\;\;of\;\;\mathcal{T}\}$. Let $\mathbb{R}_{+}^{n}=\{\mathbf{x}\in \mathbb{R}^{n}:~ \mathbf{x}\geq 0\}$. For the  spectral radius of  a  symmetric nonnegative tensor, we have the following lemma.

\begin{lemma}\label{1} (\cite{SSS,Qiii})
Let $\mathcal{T}$ be a symmetric nonnegative tensor of order $r$ and dimension $n$. Then
\begin{displaymath}
\rho(\mathcal{T})=\max\{\mathcal{T}\mathbf{x}^{r}:\mathbf{x}\in \mathbb{R}_{+}^{n}, \sum_{i=1}^{n} x _{i}^{r}=1\}.
 \end{displaymath}
Furthermore, $\mathbf{x}\in \mathbb{R}_{+}^{n}$ with $\sum_{i=1}^{n}x _{i}^{r}=1$ is an eigenvector of $\mathcal{T}$ corresponding to $\rho(\mathcal{T})$ if and only if it is an optimal solution to the above maximization problem.
\end{lemma}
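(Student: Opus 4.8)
The plan is to establish this as a variational (Rayleigh--Ritz) characterization, proving the maximum-value statement and the optimality characterization together. First I would set $S=\{\mathbf{x}\in\mathbb{R}_{+}^{n}:\sum_{i=1}^{n}x_{i}^{r}=1\}$ and $f(\mathbf{x})=\mathcal{T}\mathbf{x}^{r}$. Since $S$ is compact and $f$ is continuous, $f$ attains its maximum on $S$; write $\lambda^{\ast}=\max_{\mathbf{x}\in S}f(\mathbf{x})$ and let $\mathbf{x}^{\ast}\in S$ be a maximizer. The two things to prove are that $\lambda^{\ast}=\rho(\mathcal{T})$ and that the maximizers coincide with the nonnegative unit eigenvectors associated with $\rho(\mathcal{T})$.

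Next I would show that every maximizer $\mathbf{x}^{\ast}$ satisfies the eigen-equation $\mathcal{T}(\mathbf{x}^{\ast})^{r-1}=\lambda^{\ast}(\mathbf{x}^{\ast})^{[r-1]}$. Using the symmetry of $\mathcal{T}$, the gradients are $\nabla f=r\,\mathcal{T}\mathbf{x}^{r-1}$ and $\nabla(\sum_{i}x_{i}^{r})=r\,\mathbf{x}^{[r-1]}$. Applying the KKT conditions for maximizing $f$ over $S$ (with the sign constraints $x_{i}\ge 0$) gives, for each $k$ with $x_{k}^{\ast}>0$, the equality $(\mathcal{T}(\mathbf{x}^{\ast})^{r-1})_{k}=\mu\,(x_{k}^{\ast})^{r-1}$, and for each $k$ with $x_{k}^{\ast}=0$ the inequality $(\mathcal{T}(\mathbf{x}^{\ast})^{r-1})_{k}\le 0$; but $\mathcal{T}\ge 0$ and $\mathbf{x}^{\ast}\ge 0$ force $(\mathcal{T}(\mathbf{x}^{\ast})^{r-1})_{k}\ge 0$, so the boundary components also satisfy the equation with both sides equal to zero. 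Hence the full eigen-equation holds. Contracting with $\mathbf{x}^{\ast}$ and using $\sum_{i}(x_{i}^{\ast})^{r}=1$ identifies the multiplier: $\mu=\mathcal{T}(\mathbf{x}^{\ast})^{r}=\lambda^{\ast}$. Thus $\lambda^{\ast}$ is an eigenvalue with a nonnegative eigenvector.

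It then remains to prove $\lambda^{\ast}=\rho(\mathcal{T})$, i.e.\ that no eigenvalue exceeds $\lambda^{\ast}$ in modulus. For any eigenpair $(\lambda,\mathbf{y})$ with $\mathcal{T}\mathbf{y}^{r-1}=\lambda\mathbf{y}^{[r-1]}$, I would pass to $|\mathbf{y}|=(|y_{1}|,\ldots,|y_{n}|)$ and use the nonnegativity of the entries together with the triangle inequality to obtain, componentwise, $|\lambda|\,|y_{i}|^{r-1}=|(\mathcal{T}\mathbf{y}^{r-1})_{i}|\le(\mathcal{T}|\mathbf{y}|^{r-1})_{i}$. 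Multiplying by $|y_{i}|$, summing over $i$, and normalizing $|\mathbf{y}|$ to lie on $S$ yields $|\lambda|\le \mathcal{T}|\mathbf{y}|^{r}/\sum_{i}|y_{i}|^{r}\le\lambda^{\ast}$, so $\rho(\mathcal{T})\le\lambda^{\ast}$; the reverse inequality is immediate since $\lambda^{\ast}$ is itself an eigenvalue. Finally, the ``furthermore'' part follows from the two directions already in hand: a maximizer is an eigenvector for $\lambda^{\ast}=\rho(\mathcal{T})$ by the KKT step, and conversely any $\mathbf{x}\in S$ with $\mathcal{T}\mathbf{x}^{r-1}=\rho(\mathcal{T})\mathbf{x}^{[r-1]}$ gives $\mathcal{T}\mathbf{x}^{r}=\rho(\mathcal{T})=\lambda^{\ast}$ upon contraction, hence is optimal.

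The step I expect to be the main obstacle is the boundary analysis in the second paragraph: the constraint surface $S$ is not smooth where some coordinates vanish, so a naive Lagrange-multiplier argument does not directly deliver the eigen-equation at those coordinates. The resolution is exactly the nonnegativity hypothesis, which forces the otherwise-unconstrained inequality to collapse to an equality; making this rigorous (and, should one want a strictly positive eigenvector, invoking a weak-irreducibility or connectivity assumption) is the delicate point, whereas the comparison inequality $|\lambda|\le\lambda^{\ast}$ is comparatively routine.
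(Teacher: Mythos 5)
The paper does not prove Lemma~2.2 at all --- it is quoted from Hu--Qi and Qi (references \cite{SSS,Qiii}) --- so there is no internal proof to match against; your argument has to be judged on its own. As written it is correct and essentially self-contained. It follows the same circle of ideas as the cited sources but organizes them differently: Qi's treatment first invokes the Perron--Frobenius theorem for nonnegative tensors to get that $\rho(\mathcal{T})$ is an eigenvalue with a nonnegative eigenvector and then applies the variational characterization of the largest H-eigenvalue of a symmetric tensor, whereas you run the optimization first (compactness gives a maximizer, KKT plus nonnegativity gives the eigen-equation, contraction identifies the multiplier as $\lambda^{\ast}$) and only afterwards prove $\rho(\mathcal{T})\le\lambda^{\ast}$ by the modulus comparison $|\lambda|\,|y_i|^{r-1}\le(\mathcal{T}|\mathbf{y}|^{r-1})_i$. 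Your route buys a proof that does not presuppose the tensor Perron--Frobenius theorem (indeed it reproves the relevant fragment of it), at the cost of having to handle the nonsmooth boundary of the constraint set by hand.

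Two small points you should make explicit to close the argument fully. First, the KKT step needs a constraint qualification at the maximizer; it does hold here, since the gradient $r(\mathbf{x}^{\ast})^{[r-1]}$ of the equality constraint is supported exactly on the coordinates with $x_k^{\ast}>0$ while the gradients $-e_k$ of the active sign constraints live on the complementary coordinates, so the whole family is linearly independent. Second, your boundary analysis correctly uses $r\ge 2$ so that $\mu\,(x_k^{\ast})^{r-1}=0$ when $x_k^{\ast}=0$, making the eigen-equation hold there with both sides zero; this is where the symmetry (needed for $\nabla(\mathcal{T}\mathbf{x}^r)=r\,\mathcal{T}\mathbf{x}^{r-1}$) and the entrywise nonnegativity are both genuinely used, exactly as you flagged. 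With those two remarks added, the proof is complete; note also that positivity of the eigenvector is not claimed in the lemma, so no irreducibility hypothesis is needed.
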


\subsection{Uniform hypergraphs}

A {\it hypergraph} $H=(V(H),E(H))$ is a pair consisting of a vertex set $V(H)=\{v_{1},v_{2},\ldots,v_{n}\}$ and an edge set $E(G)=\{e_{1},e_{2},\ldots,e_{m}\}$, where $e_{j}\subseteq V(H)$ for each $j\in [m]$. If $|e_{j}|=r$ for all $j\in [m]$,
then $H$ is called an {\it $r$-uniform hypergraph}. In particular,  a $2$-uniform hypergraph is a graph.
Two vertices are said to be {\it adjacent} in $H$ if there is an edge that contains both of them.
A hypergraph $H'$ is a {\it subhypergraph} of $H$ if $V(H') \subseteq V(H)$ and $E(H')\subseteq E(H)$.
For a subset $S\subseteq V(H)$, we denoted by $E_{H}(S)$ the set of edges $\{e\in E(H) : S\cap e\neq \emptyset\}$.
For a vertex $v\in V(H)$, we simplify $E_{H}(\{v\})$ as $E_{H}(v)$.
For a subset $S\subseteq V(H)$, we define $H-S$ to be the subhypergraph obtained from $H$
by deleting the vertices in $S$ and all edges in $E_H(S)$. If $S=\{v\}$, then we write  $H-S$ simple as $H-v$.
And for a subset $E'\subseteq E(H)$,  we define  $H-E'$ to be the hypergraph obtained from $H$ by deleting the edges in $E'$ and resulting isolated vertices. If $E'=\{e\}$, then we write  $H-E'$ simple as $H-e$.

Let $H$ be an $r$-uniform hypergraph with vertex set $V(H)=\{v_{1},v_{2},\ldots,v_{n}\}$.
The {\it adjacency tensor} of $H$ defined by Cooper and Dutle \cite{J} as $\mathcal{A}(H)=(a_{i_{1}i_{2}\cdots i_{r}})$, is
an $r$th order and $n$-dimensional tensor, where
\begin{displaymath}
a_{i_{1}i_{2}\cdots i_{r}}=
   \begin{cases}
   \frac{1}{(r-1)!}, &\mbox {\rm if $\{v_{i_{1}},v_{i_{2}},\ldots,v_{i_{r}}\}\in E(H)$};\\
   \;\;\;\;0, &\mbox {\rm otherwise}.
   \end{cases}
\end{displaymath}

Eigenvalues of $\mathcal{A}(H)$ are called eigenvalues of $H$, the spectral radius of $\mathcal{A}(H)$ is called the spectral radius of $H$,
denoted by $\rho(H)$. By the Perron-Frobenius theorem for nonnegative tensors  \cite{Ch,Fr,Y},  $\rho(H)$ is exactly the largest eigenvalue of $\mathcal{A}(H)$. If $H$ is connected, then there exists a unique positive eigenvector up to a multiplicative constant corresponding to $\rho(H)$, called the {\it Perron vector} of $H$.

Let $\mathbf{x}=(x_{1},x_{2},\ldots,x_{n})\in\mathbb{C}^{n}$. Then $\mathbf{x}$ can be considered as a function defined on the vertices of $H$, that is, each vertex $v_{i}$ is mapped to $x_{i}:=x_{v_{i}}$. If $\mathbf{x}$ is an eigenvector of $\mathcal{A}(H)$, then it defines on $H$ naturally, i.e., $x_{v}$ is the entry of $\mathbf{x}$ corresponding to $v$. For a subset $U$ of $V(H)$, denote $x^{U}=\prod\limits_{u\in U}x_{u}$. Then

\begin{displaymath}\tag{1}
\mathcal{A}(H)\mathbf{x}^{r}=\sum_{e\in E(H)}rx^{e}.
 \end{displaymath}
And the eigenvector  equation  $\mathcal{A}(H)\mathbf{x}^{r-1}=\lambda \mathbf{x}^{[r-1]}$  is equivalent to that for each $v\in V(H)$,
\begin{displaymath}\tag{2}
 \begin{split}
\lambda x^{r-1}_{v}
=\sum_{e\in E_{H}(v)}x^{e\setminus \{v\}}.
 \end{split}
\end{displaymath}

The following result is useful for proofs.

\begin{lemma}\label{2}(\cite{J,Khan})
Let $H$ be a connected $r$-uniform hypergraph, and let $H'$ be a subhypergraph of $H$. Then $\rho(H')\leq\rho(H)$ with equality if and only if $H'=H$.
\end{lemma}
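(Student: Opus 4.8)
The plan is to deduce the lemma from the variational formula in Lemma~\ref{1} together with identity (1), reserving the Perron--Frobenius theory for the equality case. First I would prove the inequality $\rho(H')\le\rho(H)$. By Lemma~\ref{1} the tensor $\mathcal{A}(H')$, being symmetric and nonnegative, admits a nonnegative eigenvector $\mathbf{x}'$ for $\rho(H')$ normalised so that $\sum_{u\in V(H')}(x'_u)^{r}=1$, and then $\mathcal{A}(H')(\mathbf{x}')^{r}=\rho(H')$. I would extend $\mathbf{x}'$ to a vector $\mathbf{x}$ on $V(H)$ by putting $x_v=0$ for every $v\in V(H)\setminus V(H')$, which keeps $\sum_{v\in V(H)}x_v^{r}=1$ and $\mathbf{x}\ge 0$. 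Since $H'$ is a subhypergraph, $E(H')\subseteq E(H)$ and each edge of $H'$ lies inside $V(H')$, so by (1)
\begin{equation*}
\mathcal{A}(H)\mathbf{x}^{r}=\sum_{e\in E(H)}rx^{e}=\sum_{e\in E(H')}rx^{e}+\sum_{e\in E(H)\setminus E(H')}rx^{e}\ \ge\ \sum_{e\in E(H')}rx^{e}=\mathcal{A}(H')(\mathbf{x}')^{r}=\rho(H'),
\end{equation*}
the inequality holding because every $x^{e}\ge 0$. Feeding $\mathbf{x}$ back into Lemma~\ref{1} yields $\rho(H)\ge\mathcal{A}(H)\mathbf{x}^{r}\ge\rho(H')$, as required.

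Next I would analyse the equality $\rho(H)=\rho(H')$. In that case both inequalities in the display collapse to equalities. The first, $\rho(H)=\mathcal{A}(H)\mathbf{x}^{r}$ with $\sum_v x_v^r=1$, shows by the optimality clause of Lemma~\ref{1} that the extended vector $\mathbf{x}$ is a nonnegative eigenvector of $\mathcal{A}(H)$ for $\rho(H)$. Here connectivity of $H$ enters: its adjacency tensor is weakly irreducible, so by the Perron--Frobenius theorem for nonnegative tensors \cite{Ch,Fr,Y} the only nonnegative eigenvector for $\rho(H)$ is, up to scaling, the strictly positive Perron vector. Hence $\mathbf{x}>0$, which forces $V(H)\setminus V(H')=\emptyset$, i.e. $V(H')=V(H)$. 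The second equality then reads $\sum_{e\in E(H)\setminus E(H')}rx^{e}=0$; as $\mathbf{x}>0$ makes each $x^{e}>0$, this requires $E(H)\setminus E(H')=\emptyset$, so $E(H')=E(H)$ and therefore $H'=H$. The converse implication is immediate, since identical hypergraphs share the same adjacency tensor.

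I expect the one genuinely delicate step to be the passage from ``$\mathbf{x}$ is a nonnegative eigenvector for $\rho(H)$'' to ``$\mathbf{x}>0$''. The eigenequation (2) alone does not suffice: if the support $P=\{v:x_v>0\}$ were a proper subset, then restricting (2) to $P$ shows that $\mathbf{x}|_P$ is a positive eigenvector of the induced subhypergraph on $P$ with the same eigenvalue $\rho(H)$, whence $\rho$ of that induced subhypergraph equals $\rho(H)$; ruling this out is exactly the statement we are proving, so the reasoning becomes circular. The clean way out is weak irreducibility, which is equivalent to connectivity of $H$ and is precisely what guarantees strict positivity of the nonnegative eigenvector. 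I would therefore invoke the Perron--Frobenius theorem at this point rather than attempt a self-contained support argument.
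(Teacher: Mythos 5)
The paper does not prove Lemma~\ref{2} itself; it is quoted from \cite{J,Khan}, so there is no in-paper proof to compare against. Your argument is correct and is essentially the standard proof from those sources: extend the optimizer of the subhypergraph by zeros, apply the variational formula of Lemma~\ref{1} for the inequality, and use weak irreducibility (equivalently, connectivity of $H$) via the Perron--Frobenius theorem \cite{Ch,Fr,Y} to force strict positivity of the eigenvector in the equality case. You correctly identify the one genuinely delicate point --- that positivity of the nonnegative eigenvector cannot be extracted from the eigenequation alone without circularity --- and resolve it with the right tool.
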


\section{Extremal spectral radius of hypergraphs in $\mathcal{B}_{n}(K_{t}^{r})$}

Let $G$ be a connected $r$-uniform hypergraph with vertex set $V(G)=[t]$.
Let $L_{G}(i)=\{e\setminus \{i\}: e\in E_{H}(i)\}$ for any vertex $i\in V(G)$.
For any vertex $i\in V(G)$ and nonempty subset $S\subseteq V(G)\setminus \{i\}$,
we have $L_{G-S}(i)=\{e\setminus \{i\}: e\in E_{G}(i)\setminus E_G(S)\}$ (write $L_{G-j}(i)=L_{G-\{j\}}(i)$ for simplicity).
In order to find the hypergraphs with maximum spectral radius and minimum spectral radius in $\mathcal{B}_{n}(K_{t}^{r})$, we need the following key lemma.

\begin{lemma}\label{4}
Let $G$ be a connected  $r$-uniform hypergraph of order $t$,  and let $(n_{1}, n_{2},\ldots,n_{t})$ be a vector of positive integers
with $\sum_{i=1}^t n_i=n$. Let $i,j$ be two adjacent vertices in $G$. If $L_{G-j}(i)\subseteq L_{G-i}(j)$ and $n_{i}-n_{j}\geq 2$
(without loss of generality, assume $i<j$), then
\begin{displaymath}
\begin{split}
\rho(G\circ(n_{1},\ldots,n_{i}-1,\ldots,n_{j}+1,\ldots,n_{t}))>\rho(G\circ(n_{1},\ldots,n_{i},\ldots,n_{j},\ldots,n_{t})).
\end{split}
\end{displaymath}
\end{lemma}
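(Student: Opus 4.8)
The plan is to combine the variational formula in Lemma~\ref{1} with a single-vertex transformation. Since $G$ is connected and every $n_k\ge 1$, the blow-up $H:=G\circ(n_1,\dots,n_t)$ is connected, so it has a positive Perron vector $\mathbf{x}$, which I normalise by $\sum_{v}x_v^{r}=1$. Vertices in a common class $V_k$ are interchangeable by symmetry, so $\mathbf{x}$ is constant on each $V_k$; write $a_k$ for its value there. I realise $H':=G\circ(\dots,n_i-1,\dots,n_j+1,\dots)$ on the \emph{same} vertex set as $H$ by moving one fixed vertex $w$ from $V_i$ into $V_j$; then the edge sets of $H$ and $H'$ differ only in the edges through $w$.

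First I would record two consequences of the eigenvalue equation (2). Writing $A=E_G(i)\cap E_G(j)$ and $Q=\sum_{f\in A}\prod_{c\in f\setminus\{i,j\}}(n_c a_c)$, the edges of $H$ through $w$ satisfy $\sum_{e\in E_H(w)}x^{e}=\rho a_i^{\,r}$ (this is $a_i$ times the $i$-th line of (2)), while for a test vector $\mathbf{z}$ that agrees with $\mathbf{x}$ off $w$ and has $z_w=\xi\ge 0$ the edges of $H'$ through $w$ satisfy $\sum_{e\in E_{H'}(w)}z^{e}=\xi\,\Phi_j$, where $\Phi_j=\rho a_j^{\,r-1}-a_iQ$; the correction $-a_iQ$ records that in $H'$ the class $V_i$ met by $w$ has lost $w$. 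Since the remaining edges are common to $H$ and $H'$, Lemma~\ref{1} gives
\begin{displaymath}
\rho(H')\ \ge\ \frac{\mathcal{A}(H')\mathbf{z}^{r}}{\sum_v z_v^{r}}=\frac{\rho-r\rho a_i^{\,r}+r\xi\Phi_j}{1-a_i^{\,r}+\xi^{r}},
\end{displaymath}
and optimising the right-hand side over $\xi\ge 0$ (the maximiser is $\xi=(\Phi_j/\rho)^{1/(r-1)}$) shows that $\rho(H')>\rho(H)$ follows from the single scalar inequality $\Phi_j>\rho a_i^{\,r-1}$.

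To expose its structure, set $\ell_i=\sum_{R\in L_{G-j}(i)}\prod_{c\in R}(n_ca_c)$ and $\ell_j=\sum_{R\in L_{G-i}(j)}\prod_{c\in R}(n_ca_c)$; splitting the links at $i$ and at $j$ according to whether they contain the other vertex turns (2) into $\rho a_i^{\,r-1}=n_ja_jQ+\ell_i$ and $\rho a_j^{\,r-1}=n_ia_iQ+\ell_j$, so the target becomes
\begin{displaymath}
\big[(n_i-1)a_i-n_ja_j\big]Q+(\ell_j-\ell_i)>0 .
\end{displaymath}
Here $Q>0$ because $i,j$ are adjacent, and $\ell_j-\ell_i\ge 0$ because $L_{G-j}(i)\subseteq L_{G-i}(j)$ and all factors are positive. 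Subtracting the two lines of (2) gives $\rho(a_j^{\,r-1}-a_i^{\,r-1})=(n_ia_i-n_ja_j)Q+(\ell_j-\ell_i)$; since $n_i>n_j$, this forces $a_j>a_i$ (if $a_j\le a_i$ the right-hand side is strictly positive while the left-hand side is $\le0$).

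The main obstacle is precisely the common-neighbour term $\big[(n_i-1)a_i-n_ja_j\big]Q$: as $a_j>a_i$, its bracket can be negative, and the surplus $\ell_j-\ell_i$ from the domination hypothesis together with the gap $n_i-n_j\ge 2$ must be shown to overpower it. The two lines of (2) alone do not suffice (they render the desired inequality equivalent to itself), so this is where the \emph{global} Perron structure must enter: what is needed is a lower bound on $a_j/a_i$ valid for the whole eigenvector. I would obtain such a bound from the convex dependence of $\rho$ on the logarithms of the class sizes, where the first-order estimate reduces the whole comparison to the class-mass inequality $n_ja_j^{\,r}\log\frac{n_j+1}{n_j}\ge n_ia_i^{\,r}\log\frac{n_i}{n_i-1}$; because $n_i-1>n_j$ makes $\log\frac{n_j+1}{n_j}>\log\frac{n_i}{n_i-1}$, establishing this mass estimate would close the argument, the strict inequality being inherited directly from the strict gap $n_i-n_j\ge 2$.
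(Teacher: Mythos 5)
Your first half is sound and coincides with the paper's opening move: plugging the Perron vector of $H$ (re-weighted at the transferred vertex $w$) into the Rayleigh quotient for $H'$ reduces the claim to the scalar inequality $\bigl[(n_i-1)a_i-n_ja_j\bigr]Q+(\ell_j-\ell_i)>0$, which settles the lemma whenever $(n_i-1)a_i\ge n_ja_j$ (your $Q$, $\ell_i$, $\ell_j$ are the paper's $S_1$, $S_2'$, $S_2''$). The genuine gap is the remaining case $(n_i-1)a_i<n_ja_j$, which you correctly identify as the obstacle but do not resolve. The route you sketch --- a ``convex dependence of $\rho$ on the logarithms of the class sizes,'' a ``first-order estimate,'' and the class-mass inequality $n_ja_j^{r}\log\frac{n_j+1}{n_j}\ge n_ia_i^{r}\log\frac{n_i}{n_i-1}$ --- is not an argument: the convexity statement is nowhere established, a first-order (derivative) estimate cannot by itself certify a strict inequality under a discrete change of one unit in the class sizes, and you concede that the mass inequality itself ``would close the argument,'' i.e.\ remains unproved. (A minor point: your claimed maximiser $\xi=(\Phi_j/\rho)^{1/(r-1)}$ optimises the difference $r\xi\Phi_j-\rho\xi^{r}$ rather than the Rayleigh ratio, though the reduction to $\Phi_j>\rho a_i^{\,r-1}$ is unaffected.)

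What is missing is the paper's second test vector. In the hard case one does not keep $\mathbf{x}$ on the modified classes but redistributes its $r$-th power mass uniformly, setting $y_v=\bigl(\frac{n_ix_i^r}{n_i-1}\bigr)^{1/r}$ on $V_i'$ and $y_v=\bigl(\frac{n_jx_j^r}{n_j+1}\bigr)^{1/r}$ on $V_j'$, so that $\sum_v y_v^r=1$ still holds. Comparing $\mathcal{A}(H')\mathbf{y}^r$ with $\mathcal{A}(H)\mathbf{x}^r$ then reduces everything, after using $(n_i-1)(n_j+1)>n_in_j$ and $\ell_j\ge\ell_i$, to the elementary inequality
\begin{displaymath}
\Bigl((n_i-1)^{1-\frac1r}n_i^{\frac1r}-n_i\Bigr)x_i+\Bigl((n_j+1)^{1-\frac1r}n_j^{\frac1r}-n_j\Bigr)x_j>0,
\end{displaymath}
which is verified using the case hypothesis $x_i\le\frac{n_j}{n_i-1}x_j$ together with a one-variable convexity computation for $f(\xi)=(1+\xi)^{1/r}+(1-\xi)^{1/r-1}-\xi-2$ on $(0,\tfrac12]$. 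Without this (or some equally concrete) device your proof does not go through.
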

\begin{proof}
Let $G_{1}=G\circ(n_{1},\ldots,n_{i},\ldots,n_{j},\ldots,n_{t})$ and $G_{2}=G\circ({n_{1},\ldots,n_{i}-1,\ldots,n_{j}+1,\ldots,n_{t}})$.
Suppose $V(G)=[t]$. By definition of blow-up hypergraphs,  $G_{1}$ has vertex partition: $V(G_{1})=V_{1}\cup\cdots\cup V_{i}\cup \cdots\cup V_{j}\cup \cdots \cup V_{t}$,  where $|V_{k}|=n_{k}$ for each $k\in [t]$, and if $\{j_{1},j_{2},\ldots,j_{r}\}\in E(G)$, then $\{v_{i_1},v_{i_2},\ldots,v_{i_r}\}\in E(G_{1})$ for every $v_{i_{1}}\in V_{j_{1}}, v_{i_{2}}\in V_{j_{2}},\ldots, v_{i_{r}}\in V_{j_{r}}$.

Fix two vertices $v_i\in V_i$ and $v_j\in V_j$. Note that $G_{2}$ is a hypergraph can be obtained from $G_{1}$
by deleting every edge that contains $v_i$ and adding all edges in $\{f\cup \{v_{i}\}: f\in L_{G_{1}-v_{i}}(v_{j})\}$.
Hence, $G_{2}$ has vertex partition: $V(G_{2})=V_{1}'\cup\cdots \cup V_{i}'\cup\cdots\cup V_{j}'\cup\cdots \cup V_{t}'$, where $V_{i}'=V_{i}\setminus\{v_{i}\}, \;V_{j}'=V_{j}\cup \{v_{i}\}$  and $V_{k}'=V_{k}$ for each $k\neq i,j$.

Let $\mathbf{x}$ be the Perron vector of $G_{1}$ with $\sum_{v\in V(G_1)} x_v^r=1$.  For any two distinct vertices
$u,v\in V_{k},~k\in [t]$,
note that $L_{G_1}(u)=L_{G_1}(v)$, we have $x_{u}=x_{v}$ by definition of eigenvector equation.
For each $k\in [t]$, we write $x_{u}=x_{k}$ for any vertex $u\in V_{k}$.
Choose any vertices $u\in V_i$, $v\in V_j$, $a\in V_i'$ and $b\in V_j'$. By definition of blow-up hypergraphs, we simply write
$$
S_{1}=\sum\limits_{f:\,\{u,v\}\cup f \in E(G_1)} x^f=\sum\limits_{f:\,\{a,b\}\cup f \in E(G_2)} x^f=\sum\limits_{\{i,j,i_{1},\ldots, i_{r-2}\}\in E(G)}n_{i_{1}}x_{i_{1}}\cdots n_{i_{r-2}}x_{i_{r-2}}~(r\ge 3),
$$
and set $S_1=1$ for $r=2$. Since $i$ and $j$ are adjacent in $G$, $S_1>0$.
Note that $L_{G_1-V_j}(u)=L_{G_2-V_j'}(a)$ and $L_{G_1-V_i}(v)=L_{G_2-V_i'}(b)$. By definition of blow-up hypergraphs, we simply write
$$S_{2}'=\sum\limits_{g\in L_{G_1-V_j}(u)} x^g=\sum\limits_{g\in L_{G_2-V_j'}(a)} x^g=\sum\limits_{\{i_{1},\ldots,i_{r-1}\}\in L_{G-j}(i) } n_{i_{1}}x_{i_{1}}\cdots n_{i_{r-1}}x_{i_{r-1}}.$$
and
$$S_{2}''=\sum\limits_{g\in L_{G_1-V_i}(v)} x^g=\sum\limits_{g\in L_{G_2-V_i'}(b)} x^g=\sum\limits_{\{i_{1},\ldots,i_{r-1}\}\in L_{G-i}(j) } n_{i_{1}}x_{i_{1}}\cdots n_{i_{r-1}}x_{i_{r-1}}.$$
We have $S_{2}''\geq S_{2}'$ as $L_{G-j}(i)\subseteq L_{G-i}(j)$.
By  Lemma \ref{1} and equation $(1)$, we have
\begin{displaymath}\tag{3}
 \begin{split}
 \rho(G_{2})-\rho(G_{1})&\geq\mathcal{A}(G_{2})\mathbf{x}^{r}-\mathcal{A}(G_{1})\mathbf{x}^{r}\\
 &=\sum_{e\in E(G_{2})}rx^{e}-\sum_{e\in E(G_{1})}rx^{e}\\
 &=\sum_{e\in E_{G_{2}}(v_i)} rx^{e}-\sum_{e\in E_{G_{1}}(v_i)} rx^{e}.
  \end{split}
\end{displaymath}
Since
\begin{displaymath}
 \begin{split}
 \sum_{e\in E_{G_{2}}(v_i)} rx^{e}=&\sum_{e\in E_{G_{2}}(v_{i})\setminus E_{G_{2}}(V_{i}')}rx^{e}+\sum_{e\in E_{G_{2}}(v_{i})\cap E_{G_{2}}(V_{i}')}rx^{e}\\
=&rx_{v_i} \sum_{g\in L_{G_2-V_i'}(v_{i}) } x^g+r  x_{v_i}\cdot\sum_{a\in V_i'} x_{a} \sum_{f:\,\{a,v_i\}\cup f\in E(G_2)} x^f\\
=&rx_{i}\cdot S_{2}''+rx_i \cdot(n_i-1)x_i\cdot S_{1},
\end{split}
\end{displaymath}
and similarly
\begin{displaymath}
 \begin{split}
 \sum_{e\in E_{G_{1}}(v_i)} rx^{e}=&\sum_{e\in E_{G_{1}}(v_{i})\setminus E_{G_{1}}(V_{j})}rx^{e}+\sum_{e\in E_{G_{1}}(v_{i})\cap E_{G_{1}}(V_{j})}rx^{e}\\
=&rx_{i}\cdot S_{2}'+rx_i  \cdot n_jx_j\cdot S_{1},
\end{split}
\end{displaymath}
by (3), we have
\begin{displaymath}
 \begin{split}
&\rho(G_{2})-\rho(G_{1})\\
\geq&  rx_{i}\cdot (S_{2}''-S_{2}')+rx_i  \cdot S_{1}\cdot[(n_i-1)x_i-n_jx_j].
\end{split}
\end{displaymath}
If $(n_i-1)x_i-n_j x_j>0$, then $\rho(G_{2})>\rho(G_{1})$.

In the following, we assume $(n_i-1)x_i-n_j x_j\leq 0 $. Define a vector $\mathbf{y}$ for $G_{2}$ as follows:
\begin{displaymath}
y_{v}=
   \begin{cases}
   \left(\frac{n_{i}x_{i}^{r}}{n_{i}-1}\right)^{\frac{1}{r}}, &\mbox {if $v\in V_{i}'$},\\
    \left(\frac{n_{j}x_{j}^{r}}{n_{j}+1}\right)^{\frac{1}{r}}, &\mbox {if $v\in V_{j}'$},\\
     \;\;x_{k}, &\mbox {if $v\in V_{k}'$}\;{\rm and} \;k\neq i,j.
   \end{cases}
\end{displaymath}
It is easy to check that $\sum\limits_{v\in V(G_2)} y_v^r=1$. By Lemma \ref{1} and  equation (1),
we have
\begin{displaymath}\tag{4}
 \begin{split}
\rho(G_{2})-\rho(G_{1})\geq&\mathcal{A}(G_{2})\mathbf{y}^{r}-\mathcal{A}(G_{1})\mathbf{x}^{r}\\
=&\sum_{e\in E(G_{2})}ry^{e}-\sum_{e\in E(G_{1})}rx^{e}\\
=&r\sum_{e\in E_{G_{2}}(V_{i}'\cup V_{j}' )} y^{e}-r\sum_{e\in E_{G_{1}}(V_{i}\cup V_{j})} x^{e}.
\end{split}
\end{displaymath}

Since
\begin{displaymath}
 \begin{split}
&\sum_{e\in E_{G_{2}}(V_{i}'\cup V_{j}' )} y^{e}\\
=&\sum_{e\in E_{G_{2}}(V_{i}')\setminus E_{G_{2}}(V_{j}' )} y^{e}+\sum_{e\in E_{G_{2}}(V_{j}')\setminus E_{G_{2}}(V_{i}' )} y^{e}+\sum_{e\in E_{G_{2}}(V_{i}')\cap E_{G_{2}}(V_{j}' )} y^{e}\\
=&\sum\limits_{v_{i}'\in V_{i}'} \sum_{e\in E_{G_{2}}(v_{i}')\setminus E_{G_{2}}(V_{j}' )} y^{e}+\sum\limits_{v_{j}'\in V_{j}'} \sum_{e\in E_{G_{2}}(v_{j}')\setminus E_{G_{2}}(V_{i}' )} y^{e}+\sum\limits_{v_{i}'\in V_{i}'} y_{v_{i}'}\cdot\sum\limits_{v_{j}'\in V_{j}'} y_{v_{j}'} \sum\limits_{f:\,\{v_i',v_j'\}\cup f\in E(G_2)} y^f\\
=&\sum\limits_{v_{i}'\in V_{i}'} y_{v_i'} \sum_{g\in L_{G_2-V_j'}(v_i')}  x^{g}+\sum\limits_{v_{j}'\in V_{j}'} y_{v_j'}\sum_{g\in L_{G_2-V_i'}(v_j')} x^{g}+\sum\limits_{v_{i}'\in V_{i}'} y_{v_{i}'}\cdot\sum\limits_{v_{j}'\in V_{j}'} y_{v_{j}'} \sum\limits_{f:\,\{v_i',v_j'\}\cup f\in E(G_2)} x^f\\
=& (\sum\limits_{v_{i}'\in V_{i}'} y_{v_{i}'})\cdot S_{2}'+ (\sum\limits_{v_{j}'\in V_{j}'} y_{v_{j}'})\cdot S_{2}''+ (\sum\limits_{v_{i}'\in V_{i}'} y_{v_{i}'}\cdot\sum\limits_{v_{j}'\in V_{j}'} y_{v_{j}'})\cdot S_{1}\\
=&S_{2}'\cdot(n_{i}-1)\left(\frac{n_{i}x_{i}^{r}}{n_{i}-1}\right)^{\frac{1}{r}}+S_{2}''\cdot(n_{j}+1)\left(\frac{n_{j}x_{j}^{r}}{n_{j}+1}\right)^{\frac{1}{r}}
+S_{1}\cdot(n_{i}-1)\left(\frac{n_{i}x_{i}^{r}}{n_{i}-1}\right)^{\frac{1}{r}}(n_{j}+1)\left(\frac{n_{j}x_{j}^{r}}{n_{j}+1}\right)^{\frac{1}{r}},
 \end{split}
\end{displaymath}
and similarly
\begin{displaymath}
 \begin{split}
\sum_{e\in E_{G_{1}}(V_{i}\cup V_{j})} x^{e}
=&\sum_{e\in E_{G_{1}}(V_{i})\setminus E_{G_{1}}(V_{j} )} x^{e}+\sum_{e\in E_{G_{1}}(V_{j})\setminus E_{G_{1}}(V_{i})} x^{e}+\sum_{e\in E_{G_{1}}(V_{i})\cap E_{G_{1}}(V_{j})} x^{e}\\
=& (\sum\limits_{v_{i}\in V_{i}}x_{v_{i}})\cdot S_{2}'+ (\sum\limits_{v_{j}\in V_{j}}x_{v_{j}})\cdot S_{2}+ (\sum\limits_{v_{i}\in V_{i}}x_{v_{i}}\cdot\sum\limits_{v_{j}\in V_{j}}x_{v_{j}})\cdot S_{1}\\
=&  S_{2}'\cdot n_{i}x_{i}+S_{2}''\cdot n_{j}x_{j}+  S_{1}\cdot n_{i}x_{i}n_{j}x_{j}.
 \end{split}
\end{displaymath}
by (4), we have
\begin{displaymath}\tag{5}
 \begin{split}
&\rho(G_{2})-\rho(G_{1})\\
\geq&  r S_{1}\cdot \left[(n_{i}-1)\left(\frac{n_{i}x_{i}^{r}}{n_{i}-1}\right)^{\frac{1}{r}}(n_{j}+1)\left(\frac{n_{j}x_{j}^{r}}{n_{j}+1}\right)^{\frac{1}{r}}-n_{i}x_{i}n_{j}x_{j}\right]\\
&+r S_{2}'\cdot \left[(n_{i}-1)\left(\frac{n_{i}x_{i}^{r}}{n_{i}-1}\right)^{\frac{1}{r}}-n_{i}x_{i}\right]+rS_{2}''\cdot\left[(n_{j}+1)\left(\frac{n_{j}x_{j}^{r}}{n_{j}+1}\right)^{\frac{1}{r}}-n_{j}x_{j}\right]\\
 =&r S_{1}\cdot x_{i}x_{j}\cdot \left[(n_{i}-1)^{1-\frac{1}{r}}(n_{j}+1)^{1-\frac{1}{r}}n_{i}^{\frac{1}{r}}n_{j}^{\frac{1}{r}}-n_{i}n_{j}\right]\\
&+r S_{2}'\cdot \left[(n_{i}-1)^{1-\frac{1}{r}}n_{i}^{\frac{1}{r}}-n_{i}\right]x_{i}+rS_{2}''\left[(n_{j}+1)^{1-\frac{1}{r}}n_{j}^{\frac{1}{r}}-n_{j}\right]x_{j}\\
\geq &r S_{1}\cdot x_{i}x_{j}\cdot \left[(n_{i}-1)^{1-\frac{1}{r}}(n_{j}+1)^{1-\frac{1}{r}}n_{i}^{\frac{1}{r}}n_{j}^{\frac{1}{r}}-n_{i}n_{j}\right]\\
&+r S_{2}'\cdot \left[\left((n_{i}-1)^{1-\frac{1}{r}}n_{i}^{\frac{1}{r}}-n_{i}\right)x_{i}+\left((n_{j}+1)^{1-\frac{1}{r}}n_{j}^{\frac{1}{r}}-n_{j}\right)x_{j}\right].
\end{split}
\end{displaymath}

Since  $n_{i}-n_{j}\geq 2$, we have $(n_{i}-1)(n_{j}+1)=n_{i}n_{j}+n_{i}-n_{j}-1>n_{i}n_{j}$. Applying this fact to (5),
it suffices to prove that
\begin{displaymath}\tag{6}
\left((n_{i}-1)^{1-\frac{1}{r}}n_{i}^{\frac{1}{r}}-n_{i}\right)x_{i}+\left((n_{j}+1)^{1-\frac{1}{r}}n_{j}^{\frac{1}{r}}-n_{j}\right)x_{j}>0,
\end{displaymath}
which will imply $\rho(G_{2})>\rho(G_{1})$ by (5).

Since  $(n_i-1)x_i-n_j x_j\leq0$, we have $x_{i}\leq \frac{n_{j}}{n_{i}-1}x_{j}$.  Note that  $(n_{i}-1)^{1-\frac{1}{r}}n_{i}^{\frac{1}{r}}-n_{i}<0$ and $n_{i}-1\geq n_{j}+1$.  Then
\begin{displaymath}\tag{7}
 \begin{split}
&\left[(n_{i}-1)^{1-\frac{1}{r}}n_{i}^{\frac{1}{r}}-n_{i}\right]x_{i}+\left[(n_{j}+1)^{1-\frac{1}{r}}n_{j}^{\frac{1}{r}}-n_{j}\right]x_{j}\\
\geq&\left[(n_{i}-1)^{1-\frac{1}{r}}n_{i}^{\frac{1}{r}}-n_{i}\right]\frac{n_{j}}{n_{i}-1}x_{j}+\left[(n_{j}+1)^{1-\frac{1}{r}}n_{j}^{\frac{1}{r}}-n_{j}\right]x_{j} \\
=&\left[\left(1+\frac{1}{n_{i}-1}\right)^{\frac{1}{r}}-\frac{1}{n_{i}-1}-2+\left(1-\frac{1}{n_{j}+1}\right)^{\frac{1}{r}-1}\right]\cdot n_{j}x_{j}\\
\geq&\left[\left(1+\frac{1}{n_{i}-1}\right)^{\frac{1}{r}}+\left(1-\frac{1}{n_{i}-1}\right)^{\frac{1}{r}-1}-\frac{1}{n_{i}-1}-2\right]\cdot n_{j}x_{j}.
\end{split}
\end{displaymath}

We consider the function $f(\xi)=(1+\xi)^{\frac{1}{r}}+(1-\xi)^{\frac{1}{r}-1}-\xi-2$, where $\xi\in [0,\frac{1}{2}]$.
By calculation, we have
$$f'(\xi)=\frac{1}{r}(1+\xi)^{\frac{1}{r}-1}+(\frac{1}{r}-1)(-1)(1-\xi)^{\frac{1}{r}-2}-1,$$
$$f''(\xi)=\frac{1}{r}(\frac{1}{r}-1)(1+\xi)^{\frac{1}{r}-2}+(\frac{1}{r}-1)(\frac{1}{r}-2)(1-\xi)^{\frac{1}{r}-3}.$$
Clearly, $f''(\xi)$ is a strictly increasing function. For $\xi \in (0,\frac{1}{2}]$,
we have $f''(\xi)\geq f''(0)=(\frac{1}{r}-1)(\frac{2}{r}-2)>0$, and then $f'(\xi)>f'(0)=0$,
and furthermore $f(\xi)>f(0)=0$.
Since $n_{i}\geq3$, we have $\frac{1}{n_{i}-1}\in (0,\frac{1}{2}]$. Then
\begin{displaymath}
f\left(\frac{1}{n_{i}-1}\right)=\left(1+\frac{1}{n_{i}-1}\right)^{\frac{1}{r}}+\left(1-\frac{1}{n_{i}-1}\right)^{\frac{1}{r}-1}-\frac{1}{n_{i}-1}-2>f(0)=0.
\end{displaymath}
Therefore the inequality (6) holds.
\end{proof}

\begin{lemma}\label{4'}
Let $G$ be a connected $r$-uniform hypergraph of order $t$, and let $(n_{1}, n_{2},\ldots,n_{t})$ be
a vector of positive integers with $\sum_{i=1}^t n_i=n$. Let $S$ be a subset of $V(G)$ with at least two elements satisfies
$\{i,j\}\subseteq e$ for some $e\in E(G)$ and $L_{G-j}(i)=L_{G-i}(j)$ for any two distinct vertices $i,j\in S$.
In $\mathcal{B}_{n}(G)$, we have

{\em (1)}  if $G\circ (n_{1}, n_{2},\ldots,n_{t})$ attains the maximum spectral radius, then $|n_i-n_j|\le 1$ for any two distinct vertices $i,j\in S$.

{\em (2)}  if $G\circ (n_{1}, n_{2},\ldots,n_{t})$ attains the minimum spectral radius (write $n'=\sum_{i\in S} n_i$),
then there exists a vertex $j\in S$ such that $n_j=n'-|S|+1$ and $n_i=1$ for each $i\in S\setminus \{j\}$.
\end{lemma}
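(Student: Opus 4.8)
The plan is to obtain both parts as direct consequences of Lemma~\ref{4}, using the symmetry built into the hypothesis on $S$. Since $L_{G-j}(i)=L_{G-i}(j)$ for all distinct $i,j\in S$, both inclusions $L_{G-j}(i)\subseteq L_{G-i}(j)$ and $L_{G-i}(j)\subseteq L_{G-j}(i)$ hold, and the requirement $\{i,j\}\subseteq e$ for some $e\in E(G)$ guarantees adjacency. Consequently Lemma~\ref{4} may be invoked for the two vertices of any such pair with either one playing the role of the class being reduced: informally, for a pair of vertices in $S$, moving one unit of blow-up mass from the larger class to the smaller one strictly increases $\rho$ whenever the two sizes differ by at least $2$.

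For part~(1) I would argue by contradiction. Assume $G\circ(n_1,\ldots,n_t)$ attains the maximum spectral radius yet $|n_i-n_j|\ge 2$ for some distinct $i,j\in S$; after renaming we may take $n_i\ge n_j+2$. As $i,j\in S$ are adjacent and $L_{G-j}(i)\subseteq L_{G-i}(j)$, Lemma~\ref{4} gives
$\rho(G\circ(\ldots,n_i-1,\ldots,n_j+1,\ldots))>\rho(G\circ(n_1,\ldots,n_t))$.
The perturbed vector has only positive entries (since $n_i-1\ge n_j+1\ge 2$) and the same sum $n$, so it lies in $\mathcal{B}_n(G)$, contradicting maximality. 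Hence $|n_i-n_j|\le 1$.

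For part~(2) the idea is to run Lemma~\ref{4} in reverse so as to conclude that \emph{unbalancing} strictly decreases $\rho$. Suppose the minimizer has two distinct vertices $i,j\in S$ with $n_i\ge n_j\ge 2$. Pass to the more unbalanced vector with $n_i'=n_i+1$ and $n_j'=n_j-1$, which lies in $\mathcal{B}_n(G)$ because $n_j-1\ge 1$. In this vector $n_i'-n_j'=n_i-n_j+2\ge 2$, so Lemma~\ref{4}, applied to it and balancing it back to the original, yields
$\rho(G\circ(n_1,\ldots,n_t))=\rho(G\circ(\ldots,n_i'-1,\ldots,n_j'+1,\ldots))>\rho(G\circ(\ldots,n_i',\ldots,n_j',\ldots))$.
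Thus the unbalanced vector has strictly smaller spectral radius, contradicting minimality. Therefore at most one vertex of $S$ can have class size greater than $1$; every other vertex of $S$ has size exactly $1$, and since these $|S|$ classes sum to $n'$, the exceptional vertex $j$ satisfies $n_j=n'-(|S|-1)=n'-|S|+1$, as claimed.

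The remaining work is routine bookkeeping. The one genuinely delicate point is the reverse application in part~(2): one must apply Lemma~\ref{4} to the already-perturbed configuration rather than to the minimizer itself, and verify that this configuration still has all coordinates positive so that it is a legitimate member of $\mathcal{B}_n(G)$. The equality $L_{G-j}(i)=L_{G-i}(j)$ is precisely what allows either vertex of a pair to be the one whose class is decreased in Lemma~\ref{4}; the one-sided inclusion alone would not support these two-directional monotonicity arguments.
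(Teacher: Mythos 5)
Your proposal is correct and follows essentially the same route as the paper: both parts are deduced from Lemma~\ref{4} by contradiction, with part~(2) obtained by applying Lemma~\ref{4} to the more unbalanced configuration and "balancing it back," exactly as the paper does (the paper just names the pair so that $n_j\ge n_i\ge 2$ and perturbs to $(n_i-1,n_j+1)$, which is the same move up to relabeling). Your explicit checks that the perturbed vectors remain positive and that the symmetry $L_{G-j}(i)=L_{G-i}(j)$ licenses either vertex as the reduced class are the right points to verify and match the paper's (implicit) use of them.
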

\begin{proof}
(1) Assume $G\circ (n_{1}, n_{2},\ldots,n_{t})$ attains the maximum spectral radius in $\mathcal{B}_{n}(G)$.
Suppose to the contrary that there exist two distinct vertices $i,j\in S$ such that $n_{i}-n_{j}\geq 2$. By Lemma~\ref{4},
$$\rho(G\circ(n_{1},\ldots,n_{i}-1,\ldots,n_{j}+1,\ldots,n_{t}))>\rho(G\circ(n_{1},\ldots,n_{i},\ldots,n_{j},\ldots,n_{t}))$$
contradicting the choice of $G\circ (n_{1}, n_{2},\ldots,n_{t})$. Therefore $|n_i-n_j|\le 1$ for any two distinct vertices $i,j\in S$.

(2)  Assume $G\circ (n_{1}, n_{2},\ldots,n_{t})$ attains the minimum spectral radius in $\mathcal{B}_{n}(G)$.
Suppose to the contrary that there exist two distinct vertices $i,j\in S$ such that $n_{j}\ge n_{i}\geq 2$.
Without loss of generality, assume $i<j$.
Note that $(n_j+1)-(n_i-1)\ge 2$. By Lemma~\ref{4},
$$\rho(G\circ(n_{1},\ldots,n_{i},\ldots,n_{j},\ldots,n_{t}))>\rho(G\circ(n_{1},\ldots,n_{i}-1,\ldots,n_{j}+1,\ldots,n_{t}))$$
contradicting the choice of $G\circ(n_{1},\ldots,n_{i},\ldots,n_{j},\ldots,n_{t})$.
Thus there exists a vertex $j\in S$ such that $n_j=n'-|S|+1$ and $n_i=1$ for each $i\in S\setminus \{j\}$.
\end{proof}

\begin{theorem}\label{5}
For any $H\in \mathcal{B}_{n}(K_{t}^{r})$,
$$\rho(K_{t}^{r}\circ(n-t+1,1,1,\ldots,1))\leq\rho(H)\leq \rho (T_{t}^{r}(n)),$$
where the left equality holds if and only if $H\cong K_{t}^{r}\circ(n-t+1,1,1,\ldots,1)$, and the right equality holds if and only if  $H\cong T_{t}^{r}(n)$.
\end{theorem}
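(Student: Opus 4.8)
The plan is to derive the entire statement from Lemma~\ref{4'} by taking $S=V(K_t^r)=[t]$, so that the only real work is to check that $K_t^r$ satisfies the hypotheses of that lemma and then to upgrade its ``necessary structural conditions'' into genuine isomorphism characterizations. First I would verify the hypotheses. Since $t\ge r\ge 2$, the hypergraph $K_t^r$ is connected, and any two distinct vertices $i,j\in[t]$ lie in a common edge (any $r$-subset of $[t]$ containing both is an edge). Moreover, because every $(r-1)$-subset of $[t]\setminus\{i\}$ is a link of $i$, a direct computation gives
$$L_{K_t^r-j}(i)=\left\{\,f\subseteq[t]\setminus\{i,j\}:|f|=r-1\,\right\}=L_{K_t^r-i}(j)$$
for all distinct $i,j$; in particular $L_{K_t^r-j}(i)=L_{K_t^r-i}(j)$. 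Thus $S=[t]$ meets every requirement of Lemma~\ref{4'}.

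Next, since $\mathcal{B}_n(K_t^r)$ is a finite set, both the maximum and minimum spectral radii are attained. For any blow-up $K_t^r\circ(n_1,\ldots,n_t)$ attaining the maximum, Lemma~\ref{4'}(1) forces $|n_i-n_j|\le 1$ for all $i,j\in[t]$; writing $n=qt+s$ with $0\le s<t$, this means exactly $s$ of the parts have size $q+1$ and the remaining $t-s$ have size $q$, i.e.\ every part has size $\lfloor n/t\rfloor$ or $\lceil n/t\rceil$. For any blow-up attaining the minimum, Lemma~\ref{4'}(2) (with $n'=n$ and $|S|=t$) forces one coordinate to equal $n-t+1$ and all others to equal $1$.

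Finally I would translate these coordinate conditions into isomorphism statements. Every permutation $\sigma$ of $[t]$ is an automorphism of $K_t^r$, and it induces an isomorphism $K_t^r\circ(n_1,\ldots,n_t)\cong K_t^r\circ(n_{\sigma(1)},\ldots,n_{\sigma(t)})$. Hence every maximizing blow-up is isomorphic to the balanced blow-up with $s$ parts of size $\lceil n/t\rceil$ and $t-s$ parts of size $\lfloor n/t\rfloor$, which is exactly $T_t^r(n)$, and every minimizing blow-up is isomorphic to $K_t^r\circ(n-t+1,1,\ldots,1)$. As both of these hypergraphs belong to $\mathcal{B}_n(K_t^r)$ and hence realize the respective extremal values, we obtain the two-sided bound together with the stated equality characterizations.

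The conceptual content is already carried by Lemmas~\ref{4} and~\ref{4'}, so the main obstacle here is not an inequality estimate but the bookkeeping of the two upgrades: confirming that the necessary condition $|n_i-n_j|\le 1$ pins down a unique multiset of part sizes (so that the maximizer is unique up to isomorphism), and invoking the permutation symmetry of $K_t^r$ correctly so that coordinate relabelings do not produce non-isomorphic extremal hypergraphs. These steps are routine given the full symmetry of the complete hypergraph, but they are precisely where the ``if and only if'' clauses originate.
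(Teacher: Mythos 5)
Your proposal is correct and follows essentially the same route as the paper: both apply Lemma~\ref{4'} with $S=[t]$ after observing that $K_t^r$ satisfies $L_{K_t^r-j}(i)=L_{K_t^r-i}(j)$ for all distinct $i,j$, and then read off the extremal part-size vectors. Your additional care about attainment of the extrema and the permutation-symmetry step upgrading coordinate conditions to isomorphisms only makes explicit what the paper leaves implicit.
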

\begin{proof}
Note that a complete $r$-uniform hypergraph $K_{t}^{r}$ satisfies $L_{G-j}(i)=L_{G-i}(j)$, for any two distinct vertices $i,j\in V(K_{t}^{r})$.

On the one hand, assume $H=K_{t}^{r}\circ(n_{1},n_{2},\ldots,n_{t})$ is an $r$-uniform hypergraph
with maximum spectral radius in $\mathcal{B}_{n}(K_t^r)$.
By Lemma \ref{4'}, $|n_{i}-n_{j}|\leq 1$ for any two distinct vertices $i,j\in [t]$ which implies that each $n_{i}$ is equal to either $\lfloor\frac{n}{t}\rfloor$ or $\lceil\frac{n}{t}\rceil$. Thus $H\cong T_{t}^{r}(n)$.

On the other hand, suppose  $H=K_{t}^{r}\circ(n_{1},n_{2},\ldots,n_{t})$ is an $r$-uniform hypergraph
with minimum spectral radius  in $\mathcal{B}_{n}(K_t^r)$.
By Lemma \ref{4'}, there exists a vertex $j\in S$ such that $n_j=n-t+1$ and $n_i=1$ for each $i\in [t]\setminus \{j\}$.
Therefore $H\cong K_{t}^{r}\circ(n-t+1,1,1,\ldots,1)$.
\end{proof}

Combining Theorem \ref{5} and Lemma \ref{2}, we have  the following corollary, which gives  the maximum spectral radius in $\mathcal{B}_{n}(\mathcal{G}^{r}(t))$,  where $\mathcal{G}^{r}(t)$ denotes the set of all  $r$-uniform hypergraphs of order $t$.
This result can also be obtained from \cite{L}, when $p=r$.

\begin{corollary}
For any $H\in \mathcal{B}_{n}(\mathcal{G}^{r}(t))$,
$$\rho(H)\leq \rho (T_{t}^{r}(n)),$$
and the equality holds if and only if $H\cong T_{t}^{r}(n)$.
\end{corollary}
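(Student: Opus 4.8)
The plan is to reduce the corollary to Theorem~\ref{5} by sandwiching an arbitrary blow-up, via a subhypergraph relation, between itself and a blow-up of the complete hypergraph. First I would take any $H\in \mathcal{B}_{n}(\mathcal{G}^{r}(t))$, so that $H=G\circ(n_{1},n_{2},\ldots,n_{t})$ for some $r$-uniform hypergraph $G$ of order $t$ and some positive integer vector with $\sum_{i=1}^{t}n_{i}=n$. Since $V(G)=[t]$ and every edge of $G$ is also an edge of the complete hypergraph $K_{t}^{r}$ on the same vertex set, carrying out the blow-up with the \emph{identical} class sizes $n_{1},\ldots,n_{t}$ turns each edge of $G$ into a family of edges that already occurs in $K_{t}^{r}\circ(n_{1},\ldots,n_{t})$. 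Hence, writing $H^{*}:=K_{t}^{r}\circ(n_{1},\ldots,n_{t})\in \mathcal{B}_{n}(K_{t}^{r})$, the hypergraph $H$ is a subhypergraph of $H^{*}$ sharing the same vertex set and class partition.

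Next I would invoke connectivity to apply the comparison lemma. For $t\ge r$ the hypergraph $K_{t}^{r}$ is connected, and every blow-up with all class sizes at least $1$ remains connected, so $H^{*}$ is connected and Lemma~\ref{2} applies to the pair $H\subseteq H^{*}$, giving $\rho(H)\le\rho(H^{*})$ with equality if and only if $H=H^{*}$. Applying Theorem~\ref{5} to $H^{*}\in\mathcal{B}_{n}(K_{t}^{r})$ then yields $\rho(H^{*})\le\rho(T_{t}^{r}(n))$ with equality if and only if $H^{*}\cong T_{t}^{r}(n)$. Chaining the two bounds produces $\rho(H)\le\rho(H^{*})\le\rho(T_{t}^{r}(n))$, which is exactly the asserted inequality.

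For the equality characterization I would trace both inequalities. If $\rho(H)=\rho(T_{t}^{r}(n))$, then necessarily $\rho(H)=\rho(H^{*})$ and $\rho(H^{*})=\rho(T_{t}^{r}(n))$. The first equality forces $H=H^{*}$ by Lemma~\ref{2} (equivalently $G=K_{t}^{r}$), while the second forces $H^{*}\cong T_{t}^{r}(n)$ by Theorem~\ref{5}; combining them gives $H\cong T_{t}^{r}(n)$. The converse is immediate, since $T_{t}^{r}(n)\in\mathcal{B}_{n}(K_{t}^{r})\subseteq\mathcal{B}_{n}(\mathcal{G}^{r}(t))$ attains the upper bound.

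The single point demanding care, and the only potential obstacle, is the observation that the subhypergraph relation is preserved under blow-up with a common blow-up vector, i.e. that $G\subseteq K_{t}^{r}$ implies $G\circ(n_{1},\ldots,n_{t})\subseteq K_{t}^{r}\circ(n_{1},\ldots,n_{t})$ on the same vertex set. This is immediate from the definition of blow-up — the edges of $H^{*}$ are precisely those arising from edges of $K_{t}^{r}$, and those arising from the sub-collection $E(G)$ are exactly the edges of $H$ — but it must be stated explicitly, as it is the hinge that lets Lemma~\ref{2} and Theorem~\ref{5} combine. Everything else is a direct application of the two quoted results.
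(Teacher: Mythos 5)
Your proposal is correct and follows essentially the same route as the paper: realize $H=G\circ(n_{1},\ldots,n_{t})$ as a subhypergraph of $K_{t}^{r}\circ(n_{1},\ldots,n_{t})$, apply Lemma~\ref{2} and then Theorem~\ref{5}, and trace the two equality cases. Your explicit remarks on connectivity (needed for Lemma~\ref{2}) and on the preservation of the subhypergraph relation under a common blow-up vector are details the paper leaves implicit, but they do not change the argument.
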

\begin{proof}
Let $H\in \mathcal{B}_{n}(\mathcal{G}^{r}(t))$ and let $G\in\mathcal{G}^{r}(t)$ such that
$H=G\circ(n_{1},n_{2},\ldots,n_{t})$ where $\sum_{i=1}^t n_i=n$. Since $G$ is a subhypergraph of $K_t^r$,
$H$ is a subhypergraph of $K_t^r \circ(n_{1},n_{2},\ldots,n_{t})$.
By Lemma \ref{2},
$$\rho(H)\leq \rho(K_t^r \circ(n_{1},n_{2},\ldots,n_{t})),$$
and the equality holds if and only if $H\cong K_t^r \circ(n_{1},n_{2},\ldots,n_{t})$.
By Theorem \ref{5},
$$\rho(K_t^r \circ(n_{1},n_{2},\ldots,n_{t}))\leq \rho (T_{t}^{r}(n)),$$
and the equality holds if and only if  $K_t^r \circ(n_{1},n_{2},\ldots,n_{t})\cong T_{t}^{r}(n)$.
Therefore
$$\rho(H)\leq \rho (T_{t}^{r}(n)),$$
and the equality holds if and only if $H\cong T_{t}^{r}(n)$.
\end{proof}

\section{Extremal spectral radius of hypergraphs in $\mathcal{B}_{n}(SH(m,q,r))$}

Let $SH(m,q,r)$ be a sunflower hypergraph. For convenience, we label the vertices of $SH(m,q,r)$ of order $t$, which implies $|V(SH(m,q,r))|=r+(m-1)q=t$.
Let $V(SH(m,q,r))=X\cup Y_{1}\cup Y_{2}\cup\cdots\cup Y_{m}$, where $X=\{1,2,\ldots,r-q\}, Y_{1}=\{r-q+1,r-q+2,\ldots,r\}, Y_{2}=\{r+1,r+2,\ldots,r+q\},\ldots,Y_{m}=\{r+(m-2)q+1,r+(m-2)q+2,\ldots,r+(m-1)q=t\}$.
 If  $q=1$, then $SH(m,1,r)\cong K_{r}^{r}\circ(1,1,\ldots,1,m)$.

\begin{theorem}
Let $SH(m,1,r)$ be a sunflower hypergraph of order $t$, and let $(n_{1},n_{2},\ldots,n_{t})$ be  a vector of positive integers
with $\sum_{i=1}^{t}n_{i}=n$. In $\mathcal{B}_{n}(SH(m,1,r))$, $SH(m,1,r)\circ(n_{1},n_{2},\ldots,n_{t})$ attains the minimum spectral radius if and only if $n_{i}=1$ for each $i\in [r-1]$ and $n_{r}+n_{r+1}+\cdots+n_{r+(m-1)}=n-r+1$. Moreover,

{\em (1)}  If $n< mr$, then $SH(m,1,r)\circ(n_{1},n_{2},\ldots,n_{t})$  attains the  maximum spectral radius in $\mathcal{B}_{n}(SH(m,1,r))$ if and only if  $n_{i}=\lfloor\frac{n-m}{r-1}\rfloor$ or $\lceil\frac{n-m}{r-1}\rceil$ for each $i\in [r-1]$, and $n_j=1$ for each $j\in [t]\setminus [r-1]$.

{\em (2)}  If $n\geq mr$, then $SH(m,1,r)\circ(n_{1},n_{2},\ldots,n_{t})$  attains the  maximum spectral radius in $\mathcal{B}_{n}(SH(m,1,r))$ if and only if $SH(m,1,r)\circ(n_{1},n_{2},\ldots,n_{t})\cong  T_{r}^{r}(n)$.
\end{theorem}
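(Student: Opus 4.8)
The plan is to reduce the entire problem to the already-solved complete case, Theorem~\ref{5}. The decisive observation, extending the paper's own remark $SH(m,1,r)\cong K_r^r\circ(1,\ldots,1,m)$, is a structural identity: since the $m$ petal vertices $r,r+1,\ldots,t$ are pairwise non-adjacent and each is joined to the \emph{same} kernel $X=[r-1]$, blowing them up and taking the union of the resulting edge-families merges all petal classes into one part. Writing $N=n_r+n_{r+1}+\cdots+n_t=n-\sum_{i=1}^{r-1}n_i$, I would first prove
\[
SH(m,1,r)\circ(n_1,\ldots,n_t)\;\cong\;K_r^r\circ(n_1,\ldots,n_{r-1},N),
\]
by checking that both hypergraphs have the partition $V_1,\ldots,V_{r-1},\,W=\bigcup_a W_a$ and the same edge set (all transversals $\{v_1,\ldots,v_{r-1},w\}$ with $v_i\in V_i$, $w\in W$). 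Two consequences are recorded for later use: $\rho\big(SH(m,1,r)\circ(n_1,\ldots,n_t)\big)$ depends only on $(n_1,\ldots,n_{r-1},N)$, not on how the petal mass is split; and a tuple $(n_1,\ldots,n_{r-1},N)$ arises from $\mathcal{B}_n(SH(m,1,r))$ precisely when $n_i\ge 1$ for $i\in[r-1]$ and $N\ge m$ (the latter because each of the $m$ petals needs at least one vertex).

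With this reduction the minimum and the case $n\ge mr$ of the maximum are immediate from Theorem~\ref{5}. For the minimum, Theorem~\ref{5} identifies $K_r^r\circ(n-r+1,1,\ldots,1)$ as the unique minimiser over all of $\mathcal{B}_n(K_r^r)$; taking the large part to be $N=n-r+1$ (which satisfies $N\ge m$ since $n\ge t=r+m-1$) places this minimiser inside our feasible region, so it is the constrained minimiser, and as $N\ge m\ge 2$ for a genuine sunflower the large part cannot be a kernel part---hence $n_i=1$ for all $i\in[r-1]$ and the petals carry all the surplus mass $n-r+1$. For the maximum when $n\ge mr$, the balanced tuple defining $T_r^r(n)$ has every part of size at least $\lfloor n/r\rfloor\ge m$, so it too is feasible, and Theorem~\ref{5} makes it the unique maximiser; translating back (the petals may be split in any way summing to the corresponding balanced part) gives $H\cong T_r^r(n)$.

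The only genuinely new work is the maximum when $n<mr$, where the balanced configuration is \emph{infeasible} because the forced petal part $N\ge m$ already exceeds the balanced value $n/r$. I would argue at a maximiser $K_r^r\circ(n_1,\ldots,n_{r-1},N)$ directly via the movement Lemma~\ref{4}, which applies to any pair of vertices of the base $K_r^r$ since there $L_{G-j}(i)=L_{G-i}(j)$. First, any two kernel parts differing by at least $2$ can be evened out by a move that leaves $N$ (hence feasibility) untouched and strictly raises $\rho$, so at the maximiser the kernel is balanced, $|n_i-n_j|\le 1$. Next I claim $N=m$. If instead $N\ge m+1$, then some kernel part has $N-n_i\ge 2$: otherwise $n_i\ge N-1$ for every $i$ would give $n=\sum_i n_i+N\ge (r-1)(N-1)+N=rN-r+1$, whence $N\le (n+r-1)/r\le m+(r-2)/r<m+1$, contradicting $N\ge m+1$. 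For such an $i$ the move sending one unit from the petal part to $V_i$ is feasible (it leaves $N-1\ge m$) and strictly increases $\rho$ by Lemma~\ref{4}, contradicting maximality. Hence $N=m$, i.e. every petal has size $1$, and the kernel is balanced on the remaining mass $n-m$, giving $n_i\in\{\lfloor(n-m)/(r-1)\rfloor,\lceil(n-m)/(r-1)\rceil\}$; conversely every such tuple yields the same $\rho$ and is a maximiser.

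The main obstacle is exactly this last regime: with $N\ge m$ active one cannot quote the balanced extremiser of Theorem~\ref{5}, and Lemma~\ref{4} only improves $\rho$ when two coordinates differ by at least $2$. The crux is therefore the counting step showing that a balanced kernel together with $N\ge m+1$ always leaves a coordinate gap of at least $2$ to exploit; this converts the soft ``push toward balance'' intuition into a rigorous contradiction and simultaneously pins $N$ to exactly $m$. Uniqueness throughout is inherited from the uniqueness clauses of Theorem~\ref{5} together with the reduction's insensitivity to how the petal mass is partitioned.
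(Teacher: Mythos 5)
Your proof is correct and follows essentially the same route as the paper: the identity $SH(m,1,r)\circ(n_1,\ldots,n_t)=K_r^r\circ(n_1,\ldots,n_{r-1},N)$ with the feasibility constraint $N\ge m$, Theorem~\ref{5} for the minimum and for the maximum when $n\ge mr$, and Lemma~\ref{4} to force a balanced kernel and $N=m$ when $n<mr$. Your explicit counting argument showing that $N\ge m+1$ together with a balanced kernel leaves a gap of at least $2$ to exploit is a welcome elaboration of a step the paper states only tersely.
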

\begin{proof}
Note that $(K_{r}^{r}\circ(1,1,\ldots,1,m))\circ(n_{1},n_{2},\ldots,n_{t})=K_{r}^{r}\circ(n_{1},n_{2},\ldots,n_{r-1},n_{r}+\cdots+n_{r+(m-1)})$. By Theorem \ref{5}, $SH(m,1,r)\circ(n_{1},n_{2},\ldots,n_{t})$  attains the minimum spectral radius if and only if $n_{r}+n_{r+1}+\cdots+n_{r+(m-1)}=n-r+1$, $n_{i}=1$ for each $i\in [r-1]$.

If $n\ge mr$, then the conclusion that $SH(m,1,r)\circ(n_{1},n_{2},\ldots,n_{t})$  attains the  maximum spectral radius  if and only if $SH(m,1,r)\circ(n_{1},n_{2},\ldots,n_{t})\cong  T_{r}^{r}(n)$ follows from Theorem \ref{5}.
In the following assume $n<mr$. Note that $K_{r}^{r}$ satisfies $L_{G-j}(i)=L_{G-i}(j)$ for any two distinct $i,j\in V(K_{r}^{r})$ and $n_{r-1}+n_{r}+\cdots+n_{r+(m-1)}\ge m$. By Lemmas~\ref{4} and~\ref{4'}, $SH(m,1,r)\circ(n_{1},n_{2},\ldots,n_{t})$ attains the  maximum spectral radius if and only if $|n_i-n_j|\le 1$ ($n_i\le m$ since $n<mr$) for any $i,j\in [r-1]$ and $n_{r-1}+n_{r}+\cdots+n_{r+(m-1)}=m$.
\end{proof}

\begin{theorem}\label{6}
Let $SH(m,q,r)$ be a sunflower hypergraph of order $t$, and let $(n_{1},n_{2},\ldots,n_{t})$ be  a vector of positive integers. Then
$$\rho(SH(m,q,r)\circ(n_{1},n_{2},\ldots,n_{t}))=\left(\prod_{k\in X}n_{k}\right)^{\frac{r-1}{r}}\cdot\left[\sum\limits_{l=1}^{m}\left(\prod_{k_{l}\in Y_{l}}n_{k_{l}}\right)^{\frac{r-1}{r-q}}\right]^{\frac{r-q}{r}}.$$
\end{theorem}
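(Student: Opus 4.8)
The plan is to compute $\rho(H)$ for $H = SH(m,q,r)\circ(n_1,\ldots,n_t)$ directly from the variational formula of Lemma~\ref{1}, rather than solving an eigenvalue system. Write the vertex classes as $V_k$ of size $n_k$, where $k$ runs over the $r-q$ kernel classes $X$ and over the petal classes $Y_1,\ldots,Y_m$. Since $\mathcal{A}(H)$ is symmetric and nonnegative, Lemma~\ref{1} gives $\rho(H)=\max\{\mathcal{A}(H)\mathbf{x}^r:\mathbf{x}\ge 0,\ \sum_v x_v^r=1\}$. The first step is to rewrite the objective in terms of the class sums $\sigma_k=\sum_{v\in V_k}x_v$. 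Every edge of $H$ arises from an edge $X\cup Y_l$ of $SH(m,q,r)$ by choosing one vertex from each of the $r$ classes in $X\cup Y_l$; hence $x^e$ factorizes over classes and, summing over all choices, $(1)$ yields
\begin{displaymath}
\mathcal{A}(H)\mathbf{x}^r=r\sum_{l=1}^{m}\prod_{k\in X\cup Y_l}\sigma_k=r\Big(\prod_{k\in X}\sigma_k\Big)\sum_{l=1}^{m}\prod_{k\in Y_l}\sigma_k .
\end{displaymath}

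Because the objective depends on $\mathbf{x}$ only through the $\sigma_k$, while for a fixed $\sigma_k$ the cost $\sum_{v\in V_k}x_v^r$ is minimized (by convexity of $t\mapsto t^r$, i.e. the power-mean inequality) exactly when $\mathbf{x}$ is constant on $V_k$, the maximum is attained at a vector that is constant on each class: any other maximizer can be averaged within each class without changing the objective and with no greater cost, then rescaled to the unit sphere. Writing the common value on $V_k$ as $x_k$ gives $\sigma_k=n_kx_k$ and constraint $\sum_k n_kx_k^r=1$. After the normalizing substitution $a_k=n_k^{1/r}x_k$ (so $\sum_k a_k^r=1$ and $n_kx_k=n_k^{(r-1)/r}a_k$), the problem becomes, with $N_l=\prod_{k\in Y_l}n_k$,
\begin{displaymath}
\rho(H)=\max_{\sum_k a_k^r=1}\ r\Big(\prod_{k\in X}n_k\Big)^{\frac{r-1}{r}}\Big(\prod_{k\in X}a_k\Big)\sum_{l=1}^{m}N_l^{\frac{r-1}{r}}\prod_{k\in Y_l}a_k .
\end{displaymath}

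The core of the argument is the evaluation of this maximum, which I carry out in three layers of standard inequalities whose equality cases can be met simultaneously. First, AM--GM applied to $\{a_k^r\}$ inside the kernel and inside each petal shows that, for fixed partial budgets $s_X=\sum_{k\in X}a_k^r$ and $s_l=\sum_{k\in Y_l}a_k^r$, the products are maximized when the $a_k$ are equal within each block, giving $\prod_{k\in X}a_k=(s_X/(r-q))^{(r-q)/r}$ and $\prod_{k\in Y_l}a_k=(s_l/q)^{q/r}$. Second, I apply H\"older's inequality with the conjugate exponents $\tfrac{r}{r-q}$ and $\tfrac{r}{q}$ to obtain $\sum_l N_l^{(r-1)/r}s_l^{q/r}\le\big(\sum_l N_l^{(r-1)/(r-q)}\big)^{(r-q)/r}\big(\sum_l s_l\big)^{q/r}$, with equality precisely when $s_l\propto N_l^{(r-1)/(r-q)}$. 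Third, setting $T=\sum_l s_l=1-s_X$, the residual factor $s_X^{(r-q)/r}T^{q/r}$ is maximized by weighted AM--GM at $s_X=(r-q)/r$, $T=q/r$. Substituting the three bounds, the numerical constants collapse to cancel the leading factor $r$, and one gets
\begin{displaymath}
\rho(H)=\Big(\prod_{k\in X}n_k\Big)^{\frac{r-1}{r}}\Big(\sum_{l=1}^{m}N_l^{\frac{r-1}{r-q}}\Big)^{\frac{r-q}{r}} .
\end{displaymath}
The value is genuinely attained, since one can choose the $a_k$ meeting all three equality conditions at once (equal within each block, $s_l\propto N_l^{(r-1)/(r-q)}$, and $s_X=(r-q)/r$), so the chained inequalities become an equality.

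I expect the main obstacle to be the middle layer: recognizing that the petal budgets must be allocated in proportion to $N_l^{(r-1)/(r-q)}$ and packaging this as a single H\"older inequality with the correct conjugate exponents $\tfrac{r}{r-q},\tfrac{r}{q}$. This is the only place where the specific exponent $\tfrac{r-1}{r-q}$ in the statement is produced. The surrounding bookkeeping --- tracking exponents through the substitution $a_k=n_k^{1/r}x_k$ and verifying that the powers of $r-q$, $q$, and $r$ telescope so as to absorb the leading factor $r$ --- is routine but error-prone and must be done carefully.
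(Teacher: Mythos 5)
Your proof is correct, but it travels a genuinely different road from the paper's. The paper takes $\mathbf{x}$ to be the Perron vector of the blow-up, notes it must be constant on each class $V_k$, and then manipulates the eigenvector equations directly: it divides the equation at a kernel vertex by the global identity $\rho=\mathcal{A}\mathbf{x}^r$ to get $x_i=(rn_i)^{-1/r}$ for $i\in X$, multiplies the $q$ equations within each petal $Y_l$ to extract $\prod_{j_l\in Y_l}x_{j_l}$ as a power of $\rho$, and substitutes back to solve for $\rho$. You instead never touch the eigen-equations: you work purely with the variational formula of Lemma~\ref{1}, reduce to class-constant vectors by the averaging/rescaling argument, and then evaluate the resulting constrained maximum by AM--GM within blocks, H\"older across petals with exponents $\tfrac{r}{r-q},\tfrac{r}{q}$, and weighted AM--GM between the kernel and petal budgets; I checked that the constants do collapse to cancel the leading $r$ and that the three equality conditions are simultaneously attainable, so the chain is tight. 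The paper's computation is shorter once one grants the Perron--Frobenius facts (connectivity, positivity, and the class-symmetry of the Perron vector, which it justifies only by reference to the argument in Lemma~\ref{4}); your route needs none of that uniqueness machinery, certifies attainment explicitly, and produces the optimizer rather than verifying a guessed one --- at the price of more delicate exponent bookkeeping. Both are valid; yours would serve equally well as a proof of the theorem.
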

\begin{proof}
We denote  $SH(m,q,r)\circ(n_{1},n_{2},\ldots,n_{t})$ and $\rho(SH(m,q,r)\circ(n_{1},n_{2},\ldots,n_{t}))$ by $\hat{G}$ and  $\rho$, respectively.
By definition of blow-up hypergraphs,  $\hat{G}$ has vertex partition: $V(\hat{G})=V_{1}\cup V_{2}\cup \cdots\cup V_{t}$,
where $|V_{j}|=n_{j}$ for each $j\in [t]$, and if $\{j_{1},j_{2},\ldots,j_{r}\}\in E(SH(m,q,r))$, then $\{v_{i_1},v_{i_2},\ldots,v_{i_r}\}\in E(\hat{G})$ for every $v_{i_{1}}\in V_{j_{1}}, v_{i_{2}}\in V_{j_{2}},\ldots, v_{i_{r}}\in V_{j_{r}}$.
Let $\mathbf{x}$ be the Perron vector of $\hat{G}$ with $\sum_{v\in V(\hat{G})} x_v^r=1$. Similar to the discussion in Lemma \ref{4},
let $x_{u}=x_{k}$ for any vertex $u\in V_{k}$ where $k\in [t]$.
 By equation (1),   we have
  \begin{displaymath}\tag{8}
\begin{split}
\rho=\mathcal{A}(\hat{G})\mathbf{x}^{r}=\sum_{e\in E(\hat{G})}rx^{e}=r\prod\limits_{k\in X}n_{k}x_{k}\cdot \sum\limits_{l=1}^{m}\prod\limits_{k_{l}\in Y_{l}}n_{k_{l}}x_{k_{l}}.
\end{split}
\end{displaymath}

By the eigenvector equation $\mathcal{A}(\hat{G})\mathbf{x}^{r-1}=\rho \mathbf{x}^{[r-1]}$, from equation (2), we have for each $v_{i}\in V_{i},i\in X$,
 \begin{equation}\tag{9}
 \begin{split}
\rho x_{v_{i}}^{r-1}=\rho x_{i}^{r-1}=\sum_{e\in E_{\hat{G}}(v_{i})} x^{e\setminus\{v_{i}\}}=\prod_{k\in X\setminus
\{i\}}n_{k}x_{k}\cdot \sum\limits_{l=1}^{m}\prod\limits_{k_{l}\in Y_l}n_{k_{l}}x_{k_{l}},
 \end{split}
\end{equation}
and for each $v_{j_{l}}\in V_{j_l},j_l\in Y_l$, $l\in [m]$,
\begin{equation}\tag{10}
 \begin{split}
\rho x_{v_{j_{l}}}^{r-1}=\rho x_{j_{l}}^{r-1}=\sum_{e\in E_{\hat{G}}(v_{j_l})} x^{e\setminus\{v_{j_l}\}}=\prod\limits_{k\in X}n_{k}x_{k}\cdot \prod\limits_{k_{l}\in Y_{l}\setminus\{j_{l}\}}n_{k_{l}}x_{k_{l}}.
  \end{split}
\end{equation}

   Dividing equation (9) by equation (8), we have $x_{i}^{r-1}=\frac{1}{rn_{i}x_{i}}$, so $x_{i}=\left(\frac{1}{rn_{i}}\right)^{\frac{1}{r}}$, where $i\in X$. Then
\begin{displaymath}\tag{11}
\begin{split}
\prod\limits_{k\in X}n_{k}x_{k}=r^{\frac{q-r}{r}}\left(\prod\limits_{k\in X}n_{k}\right)^{\frac{r-1}{r}}.
\end{split}
\end{displaymath}
Next, for all vertices of $Y_{l}$, where $l\in [m]$, we have $q$ equations by (10). Multiplying these $q$ equations, we have
 \begin{displaymath}\tag{12}
\begin{split}
\rho^{q}\prod_{j_{l}\in Y_{l}}x_{j_{l}}^{r-1}&=\left(\prod\limits_{k\in X}n_{k}x_{k}\right)^{q}\cdot \prod\limits_{j_l\in Y_{l}} \left(\prod\limits_{k_{l}\in Y_{l}\setminus\{j_{l}\}}n_{k_{l}}x_{k_{l}}\right)\\
&=\left(\prod\limits_{k\in X}n_{k}x_{k}\right)^{q}\cdot\prod_{j_{l\in Y_{l}}}n_{j_{l}}^{q-1}x_{j_{l}}^{q-1}.
\end{split}
\end{displaymath}
Dividing both sides of equation (12) by $\prod_{j_{l\in Y_{l}}}x_{j_{l}}^{q-1}$,  by combining equation (11), we have
\begin{equation}\tag{13}
\begin{split}
\prod_{j_{l}\in Y_{l}}x_{j_{l}}=\rho^{\frac{-q}{r-q}}r^{\frac{-q}{r}}\left(\prod\limits_{k\in X}n_{k}\right)^{\frac{q(r-1)} {r(r-q)}}\cdot\left(\prod_{j_{l\in Y_{l}}}n_{j_{l}}\right)^{\frac{q-1}{r-q}}.
\end{split}
\end{equation}
Applying equations (11) and  (13) to equation (8), we have
\begin{displaymath}
\begin{split}
\rho=\left(\prod_{k\in X}n_{k}\right)^{\frac{r-1}{r}}\cdot\left[\sum\limits_{l=1}^{m}\left(\prod_{k_{l}\in Y_{l}}n_{k_{l}}\right)^{\frac{r-1}{r-q}}\right]^{\frac{r-q}{r}}.
\end{split}
\end{displaymath}
\end{proof}

In order to find the hypergraphs with extremal spectral radius in $\mathcal{B}_{n}(SH(m,q,r))$, $q\geq2$, we will need  the following results.

\begin{lemma}\label{7}
Let $l$ and $\theta$ be two positive integers with $\theta\ge l \ge 3$.
Let $\mathbf{b}=(b_{1},b_{2},\ldots,b_{l})$ be a vector of positive integers with $\sum_{i=1}^l b_i=\theta$.
We define
$$R(\mathbf{b})=b_{1}^{\beta}(b_{2}^{\beta}+b_{3}^{\beta}+\cdots+b_{l}^{\beta}),
$$
where $\beta$ is any positive real number
no less than 1. Then
$R(\mathbf{b})$ attains the minimum value if and only if  $b_{1}=1$, $b_{i}=\left\lfloor\frac{\theta-1}{l-1}\right\rfloor$ or $\left\lceil\frac{\theta-1}{l-1}\right\rceil$ for $i\neq1$.
\end{lemma}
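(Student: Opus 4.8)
The plan is to prove the statement in two independent stages: first show that \emph{every} minimizer must satisfy $b_1=1$, and then, with $b_1=1$ forced, reduce to the classical fact that a sum of $\beta$-th powers of positive integers with fixed sum is minimized precisely by the balanced partition. (Since $\mathcal{B}_n$ is finite the minimum is attained, so it suffices to characterize the minimizers by local comparisons.) It is convenient to rewrite
$$R(\mathbf b)=b_1^\beta\sum_{i=2}^{l}b_i^\beta=\sum_{i=2}^{l}(b_1b_i)^\beta,$$
and to record the identity $R(\mathbf b)=b_1^\beta\bigl(U-b_1^\beta\bigr)$, where $U=\sum_{i=1}^{l}b_i^\beta$ is the total; the latter exhibits $R$ as a downward parabola in the single quantity $b_1^\beta$ and is the key to the hardest case.

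For the first stage I would use a two–coordinate exchange. Fix $j\in\{2,\dots,l\}$ and compare $\mathbf b$ with the vector obtained by replacing $(b_1,b_j)$ by $(b_1-1,b_j+1)$. Writing $C=\sum_{i\neq 1,j}b_i^\beta>0$ (the index set is nonempty since $l\ge 3$), the resulting change $R(\mathbf b)-R(\mathbf b')$ equals
$$\bigl[(b_1b_j)^\beta-((b_1-1)(b_j+1))^\beta\bigr]+C\bigl[b_1^\beta-(b_1-1)^\beta\bigr].$$
The second bracket is positive, and since $(b_1-1)(b_j+1)=b_1b_j+(b_1-b_j)-1<b_1b_j$ whenever $b_1\le b_j$, the first bracket is positive too. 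Hence if $b_1\ge 2$ and some $j\ge 2$ has $b_j\ge b_1$, this move strictly decreases $R$ and $\mathbf b$ cannot be a minimizer. I expect the main obstacle to be exactly the complementary case: $b_1\ge 2$ but $b_1$ is the \emph{strict} maximum of all coordinates, so no such $j$ exists and moving mass out of $b_1$ directly is unavailable (this reflects that, on the balanced slice, $R$ is not monotone in $b_1$ and has a competing ``large-$b_1$'' endpoint).

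To dispose of that case I would invoke the parabola identity. Since $R$ depends on the multiset $\{b_1,\dots,b_l\}$ only through $U$ and through which entry is placed in the first coordinate, I compare putting the minimum versus the maximum entry there; a direct computation gives
$$b_{\min}^\beta(U-b_{\min}^\beta)-b_{\max}^\beta(U-b_{\max}^\beta)=(b_{\max}^\beta-b_{\min}^\beta)\,(b_{\max}^\beta+b_{\min}^\beta-U),$$
which is $\le 0$ because $U>b_{\max}^\beta+b_{\min}^\beta$ (again using $l\ge 3$), and is $<0$ when $b_{\max}>b_{\min}$. Thus if $b_1$ is the strict maximum, swapping it with the minimum entry strictly decreases $R$ and produces a configuration with $b_1\le b_j$ for some $j\ge 2$, which the exchange above then shows is not minimal. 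Combining the two cases, any minimizer has $b_1=1$. Finally, with $b_1=1$ we are left to minimize $R(\mathbf b)=\sum_{i=2}^{l}b_i^\beta$ subject to $\sum_{i=2}^{l}b_i=\theta-1$; by the strict convexity of $t\mapsto t^\beta$ for $\beta>1$, the standard smoothing step (replace $(b_i,b_j)$ by $(b_i-1,b_j+1)$ whenever $b_i\ge b_j+2$) strictly decreases the sum, so the unique minimizer is the balanced one, namely every $b_i\in\{\lfloor(\theta-1)/(l-1)\rfloor,\lceil(\theta-1)/(l-1)\rceil\}$, as claimed. Note that strict convexity (hence $\beta>1$, which is the regime $\beta=\frac{r-1}{r-q}$ with $q\ge 2$ arising in Theorem~\ref{6}) is used only in this last balancing step, whereas the reduction to $b_1=1$ uses only monotonicity of $t\mapsto t^\beta$.
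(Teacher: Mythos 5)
Your proof is correct and rests on essentially the same ingredients as the paper's: a transposition comparison (your ``parabola identity'' $R=b_1^\beta(U-b_1^\beta)$ is exactly the paper's computation $R(\mathbf b)-R(\mathbf b')=(b_1^\beta-b_2^\beta)(b_3^\beta+\cdots+b_l^\beta)$, which the paper uses up front to force $b_1\le b_2$ and you use as a fallback when $b_1$ is the strict maximum), the unit transfer $(b_1,b_j)\mapsto(b_1-1,b_j+1)$ to force $b_1=1$, and convex balancing of the remaining coordinates. Your explicit remark that the final balancing step needs \emph{strict} convexity, i.e.\ $\beta>1$, is a point the paper glosses over (its statement allows $\beta=1$, where the ``only if'' direction genuinely fails, e.g.\ $(1,1,3)$ and $(1,2,2)$ tie for $\theta=5$, $l=3$), though this is harmless since the lemma is only invoked with $\beta=\frac{r-1}{r-q}>1$.
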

\begin{proof}
Let $R(\mathbf{b})$ attains its minimum at $\mathbf{b}=(b_{1},b_{2},\ldots,b_{l})$. Now we prove $b_{1}\leq b_2$.
Suppose to the contrary that $b_{1}> b_2$, we consider that a new vector $\mathbf{b'}=(b_{2},b_{1},b_{3},\ldots,b_{l})$, and $\mathbf{b'}$ also satisfy $b_{2}+b_{1}+\cdots+b_{l}=\theta$. Then
\begin{displaymath}
R(\mathbf{b})-R(\mathbf{b'})=(b_{1}^{\beta}-b_{2}^{\beta})(b_{3}^{\beta}+b_{4}^{\beta}+\cdots+b_{l}^{\beta})>0.
\end{displaymath}
This is a contradiction to the choice of $\mathbf{b}$. Hence, $b_{1}\leq b_2$.

Suppose to the contrary that $2 \leq b_{1}\leq b_{2}$.  We consider $\mathbf{b''}=(b_{1}-1,b_{2}+1,b_{3},\ldots,b_{l})$
where $\mathbf{b''}$ also satisfy $b_{1}-1+b_{2}+1+\cdots+b_{l}=\theta$. We have
\begin{displaymath}
\begin{split}
R(\mathbf{b})-R(\mathbf{b''})=&[b_{1}^{\beta}b_{2}^{\beta}-(b_{1}-1)^{\beta}(b_{2}+1)^{\beta}]\\
&+[(b_{1}^{\beta}-(b_{1}-1)^{\beta}]\cdot(b_{3}^{\beta}+b_{4}^{\beta}+\cdots+b_{l}^{\beta})\\
>&0,
\end{split}
\end{displaymath}
which contradicts the choice of $\mathbf{b}$. Hence, $b_{1}=1$ and then $R(\mathbf{b})=b_{2}^{\beta}+b_{3}^{\beta}+\cdots+b_{l}^{\beta}$ with $\sum_{i=1}^2 b_i=\theta-1$.
Since $\beta\ge 1$,  the result follows from the lower convex function property.
\end{proof}

Let $p$ be a positive integer no less than 2. For any positive integer $s\ge p$, define
$$g_p(s)=a^{p-l}(a+1)^{l}$$
where $s=ap+l$, $a=\lfloor\frac{s}{p}\rfloor$ and $0\le l\le p-1$.

\begin{lemma}\label{8}
Let $q$, $m$ and $\theta$ be three positive integers with $q\ge 2$, $m\ge 2$ and $\theta\ge mq$.
Let $\mathbf{s}=(s_{1},s_{2},\ldots,s_{m})$ be a vector of positive integers with $s_m\ge s_{m-1}\ge\cdots \ge s_1\ge q$ and $\sum\limits_{i=1}^m s_i= \theta$.
We define
$$f(\mathbf{s})=g_q^{\beta}(s_1)+g_q^{\beta}(s_2)+\cdots+g_q^{\beta}(s_m)$$
where $\beta$ is any positive real number larger than 1. Then
$f(\mathbf{s})$ attains the maximum value if and only if $s_{1}=s_2=\cdots=s_{m-1}=q$ and $s_m=\theta-(m-1)q$.
\end{lemma}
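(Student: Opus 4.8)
The plan is to reduce the whole statement to one analytic fact, namely that the sequence $g_q^\beta$ is \emph{strictly convex} on the integers $s\ge q$, and then to finish with a one-step exchange argument. Since $f$ is a symmetric function of the multiset $\{s_1,\dots,s_m\}$, the ordering $s_m\ge\cdots\ge s_1$ is only a labelling convention; note also that $\theta-(m-1)q\ge q$ follows from $\theta\ge mq$, so the claimed vector is feasible.

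First I would record the elementary structure of $g_q$ directly from its definition. Writing $s=aq+l$ with $a=\lfloor s/q\rfloor$ and $0\le l\le q-1$, a short computation of the new parameters for $s+1$ gives the forward difference
\[
g_q(s+1)-g_q(s)=\begin{cases}a^{\,q-l-1}(a+1)^{l}, & 0\le l\le q-2,\\[2pt] (a+1)^{q-1}, & l=q-1.\end{cases}
\]
From this one checks that $\Delta(s):=g_q(s+1)-g_q(s)$ is non-decreasing for $s\ge q$: inside a block (fixed $a$) successive differences grow by the factor $(a+1)/a>1$, while the last difference of block $a$ and the first difference of block $a+1$ both equal $(a+1)^{q-1}$. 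Hence $g_q$ is a strictly increasing, convex integer sequence on $s\ge q$.

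Next I would upgrade this to \emph{strict} convexity of $g_q^\beta$. Fix $s\ge q+1$ and set $u=g_q(s-1)<v=g_q(s)<w=g_q(s+1)$, so convexity of $g_q$ gives $2v\le u+w$. Because $\phi(x)=x^\beta$ is strictly convex and increasing for $\beta>1$, Jensen's inequality (strict since $u\ne w$) together with monotonicity gives
\[
\tfrac12\bigl(u^\beta+w^\beta\bigr)>\phi\!\Bigl(\tfrac{u+w}{2}\Bigr)\ge\phi(v)=v^\beta,
\]
that is, $g_q^\beta(s-1)+g_q^\beta(s+1)>2\,g_q^\beta(s)$. Thus the forward differences of $g_q^\beta$ are \emph{strictly} increasing on $s\ge q$.

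Finally I would run the exchange argument. If $\mathbf{s}$ maximizes $f$ but differs from the claimed vector, then (using $\sum_i s_i=\theta$) some coordinate other than the largest exceeds $q$, so there are indices $k<m$ with $s_k\ge q+1$ and $s_m\ge s_k$. Replacing $(s_k,s_m)$ by $(s_k-1,s_m+1)$ keeps every coordinate $\ge q$ and preserves the sum, while strict monotonicity of the differences of $g_q^\beta$ yields
\[
g_q^\beta(s_m+1)-g_q^\beta(s_m)>g_q^\beta(s_k)-g_q^\beta(s_k-1),
\]
so $f$ strictly increases, contradicting maximality. Hence every maximizer satisfies $s_1=\cdots=s_{m-1}=q$ and $s_m=\theta-(m-1)q$; as the feasible set is finite a maximizer exists, so it must be exactly this vector, and it is unique, which gives both directions of the stated equivalence. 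The main obstacle is the first step — extracting convexity of $g_q$ cleanly from its floor-function definition, especially checking that $\Delta$ does not decrease when crossing a block boundary $\lfloor s/q\rfloor=a\to a+1$; once that is in hand, the strict convexity of $g_q^\beta$ and the exchange are routine.
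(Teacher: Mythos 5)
Your proof is correct, and while it shares the paper's overall skeleton --- a local exchange $(s_k,s_m)\mapsto(s_k-1,s_m+1)$ that strictly increases $f$ whenever some non-maximal coordinate exceeds $q$ --- the way you verify the exchange inequality is genuinely different and, I think, cleaner. The paper splits into four cases according to whether the residues $r_1,r_2$ of $s_1,s_2$ modulo $q$ vanish, writes out $g_q(s_1-1),g_q(s_2+1)$ explicitly in each case, and in each case reduces to the monotonicity of $x\mapsto(x+1)^{\beta}-x^{\beta}$ together with ad hoc comparisons of the prefactors $a_1^{\beta(q-1)}$, $a_2^{\beta(q-r_2-1)}(a_2+1)^{\beta r_2}$, etc. You instead isolate a single structural fact --- that the forward differences $\Delta(s)=g_q(s+1)-g_q(s)=a^{q-l-1}(a+1)^{l}$ are non-decreasing for $s\ge q$, i.e.\ $g_q$ is a convex increasing integer sequence --- and then obtain strict convexity of $g_q^{\beta}$ by composing with the strictly convex increasing map $x\mapsto x^{\beta}$ via the chain $\tfrac12(u^{\beta}+w^{\beta})>\bigl(\tfrac{u+w}{2}\bigr)^{\beta}\ge v^{\beta}$; after that the exchange inequality $D(s_m)>D(s_k-1)$ is immediate. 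Your one delicate point, the behaviour of $\Delta$ across a block boundary, is handled correctly (both the last difference of block $a$ and the first of block $a+1$ equal $(a+1)^{q-1}$), and the endpoint issues are fine since $s_k-1\ge q$ keeps all evaluations in the domain of $g_q$. What your route buys is modularity and brevity: one two-line difference formula replaces four pages-worth of case computations, and the convexity of $g_q$ is a reusable statement. What the paper's route buys is that it never needs to assert convexity of $g_q$ as a sequence --- it only ever compares the two specific increments that actually occur --- which makes each individual step more elementary even though the whole is longer.
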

\begin{proof}
We show if $q+1\le s_i\le s_j$ for some $1\le i<j\le m$, then $f(\mathbf{s})<f(\mathbf{s'})$,
where $\mathbf{s'}=(s_1,\ldots, s_i-1,\ldots,s_j+1,\ldots,s_m)$. Let $s_i=a_iq+r_i$ where $a_i=\lfloor\frac{s_i}{q}\rfloor$ and $0\le r_i\le q-1$ for each $i\in [m]$. Without loss of generality, we only prove the case that $i=1$ and $j=2$. Note that $q+1\le s_1\le s_2$, $\mathbf{s'}=(s_1-1,s_2+1,s_3,\ldots,s_m)$ and $\beta>1$.

If $r_1=0$ and $r_2=0$, then
\begin{displaymath}
\begin{split}
f(\mathbf{s'})-f(\mathbf{s})&=g_q^{\beta}(s_1-1)+g_q^{\beta}(s_2+1)-g_q^{\beta}(s_1)-g_q^{\beta}(s_2)\\
&=(a_1-1)^{\beta} a_1^{\beta(q-1)}+a_2^{\beta(q-1)}(a_2+1)^{\beta}-a_1^{\beta q}-a_2^{\beta q}\\
&=a_2^{\beta(q-1)}[(a_2+1)^{\beta}-a_2^{\beta}]-a_1^{\beta(q-1)}[(a_1)^{\beta}-(a_1-1)^{\beta}]\\
&\ge a_2^{\beta(q-1)}\{[(a_2+1)^{\beta}-a_2^{\beta}]-[(a_1)^{\beta}-(a_1-1)^{\beta}]\}\\
&>0.
\end{split}
\end{displaymath}

If $r_1=0$ and $r_2>0$, then
\begin{displaymath}
\begin{split}
f(\mathbf{s'})-f(\mathbf{s})&=(a_1-1)^{\beta}a_1^{\beta(q-1)}+a_2^{\beta(q-r_2-1)}(a_2+1)^{\beta(r_2+1)}-a_1^{\beta q}-a_2^{\beta (q-r_2)} (a_2+1)^{\beta r_2}\\
&=a_2^{\beta(q-r_2-1)}(a_2+1)^{\beta r_2}[(a_2+1)^{\beta}-a_2^{\beta}]-a_1^{\beta(q-1)}[(a_1)^{\beta}-(a_1-1)^{\beta}]\\
&> a_1^{\beta(q-1)}\{[(a_2+1)^{\beta}-a_2^{\beta}]-[(a_1)^{\beta}-(a_1-1)^{\beta}]\}\\
&> 0.
\end{split}
\end{displaymath}

If $r_1>0$ and $r_2=0$, then $1\le a_1\le a_2-1$ and
\begin{displaymath}
\begin{split}
f(\mathbf{s'})-f(\mathbf{s})&=a_1^{\beta(q-r_1+1)}(a_1+1)^{\beta(r_1-1)}+(a_2+1)^{\beta}a_2^{\beta(q-1)}-a_1^{\beta (q-r_1)} (a_1+1)^{\beta r_1}-a_2^{\beta q}\\
&=a_2^{\beta(q-1)}[(a_2+1)^{\beta}-a_2^{\beta}]-a_1^{\beta(q-r_1)}(a_1+1)^{\beta (r_1-1)}[(a_1+1)^{\beta}-(a_1)^{\beta}]\\
&> a_2^{\beta(q-1)}\{[(a_2+1)^{\beta}-a_2^{\beta}]-[(a_1+1)^{\beta}-(a_1)^{\beta}]\}\\
&> 0.
\end{split}
\end{displaymath}

If $r_1>0$ and $r_2>0$ (we have $r_1\le r_2$ when $a_1=a_2$), then
\begin{displaymath}
\begin{split}
&f(\mathbf{s'})-f(\mathbf{s})\\
=&a_1^{\beta(q-r_1+1)}(a_1+1)^{\beta(r_1-1)}+a_2^{\beta(q-r_2-1)}(a_2+1)^{\beta(r_2+1)}-a_1^{\beta (q-r_1)} (a_1+1)^{\beta r_1}-a_2^{\beta (q-r_2)} (a_2+1)^{\beta r_2}\\
=&a_2^{\beta(q-r_2-1)}(a_2+1)^{\beta r_2}[(a_2+1)^{\beta}-a_2^{\beta}]-a_1^{\beta(q-r_1)}(a_1+1)^{\beta (r_1-1)}[(a_1+1)^{\beta}-a_1^{\beta}]\\
>&a_1^{\beta(q-r_1)}(a_1+1)^{\beta (r_1-1)}\{[(a_2+1)^{\beta}-a_2^{\beta}]-[(a_1+1)^{\beta}-a_1^{\beta}]\}\\
\ge& 0.
\end{split}
\end{displaymath}

By the above discussion, we have $f(\mathbf{s})$ attains the maximum value if and only if $s_{1}=s_2=\cdots=s_{m-1}=q$ and $s_m=\theta-(m-1)q$.
\end{proof}

\begin{theorem}\label{9}
Let $q$, $m$, $r$, $t$ and $n$ be positive integers with $q\ge 2$, $m\ge 2$, $r\ge 3$ and $n\ge t= r+(m-1)q$.
Let $p_1,p_2,\ldots,p_c$ be all the maximum points of
\begin{equation}\tag{14}\label{e14}
[g_{r-q}(s)]^{\frac{r-1}{r-q}}\cdot\left[m-1+ [g_q(n-s-(m-1)q)]^{\frac{r-1}{r-q}}\right]
\end{equation}
where $r-q\le s \le n-mq$. For each $k\in [c]$,
let $H_k=SH(m,q,r)\circ (n_{1},n_{2},\ldots,n_{t})$ where $\sum_{i\in X} n_i=p_k$, $\sum_{j\in Y_m} n_j=n-(m-1)q-p_k$, $|n_i-n_j|\le 1$
for any two distinct vertices $i,j\in X$ or $i,j\in Y_m$, and $n_j=1$ for each $j\in \bigcup_{l=1}^{m-1} Y_l$.
In $\mathcal{B}_{n}(SH(m,q,r))$,

{\em (1)} $SH(m,q,r)\circ (n_{1},n_{2},\ldots,n_{t})$ attains the minimum spectral radius if and only if $n_{r},n_{r+q},\ldots,n_{r+(m-1)q}=\left\lfloor\frac{n-t}{m}\right\rfloor+1$ or $\left\lceil\frac{n-t}{m}\right\rceil+1$
and $n_{i}=1$ for each $i\in [t]\setminus \{r, r+q,\ldots,r+(m-1)q\}$.

{\em (2)} $H=SH(m,q,r)\circ (n_{1},n_{2},\ldots,n_{t})$ attains the maximum spectral radius if and only if
$H$ is isomorphic to one of $H_1,H_2,\ldots, H_c$.
\end{theorem}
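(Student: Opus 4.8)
The plan is to treat the closed form from Theorem~\ref{6} as the objective to be optimized over all positive integer vectors $(n_1,\dots,n_t)$ with $\sum_i n_i=n$. Writing $P=\prod_{k\in X}n_k$ for the kernel product and $Q_l=\prod_{k\in Y_l}n_k$ for the $l$-th petal product, Theorem~\ref{6} reads
\[
\rho=P^{\frac{r-1}{r}}\Big[\sum_{l=1}^m Q_l^{\gamma}\Big]^{\frac{r-q}{r}},\qquad \gamma:=\tfrac{r-1}{r-q},
\]
and the single structural fact driving everything is that $\gamma>1$, since $q\ge 2$ gives $r-1>r-q$. Both extremal characterizations are read off by optimizing $P$, each $Q_l$, and the split of mass between kernel and petals, in that order.

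For part~(1) I would first apply Lemma~\ref{4'}(2) to the kernel $X$ and to each petal $Y_l$ separately: any two vertices of $X$, and any two vertices of a fixed $Y_l$, lie in a common edge and satisfy $L_{G-j}(i)=L_{G-i}(j)$, so at a minimizer each of these sets puts all of its surplus on a single vertex and $1$'s elsewhere. This collapses the problem to minimizing $a^{\frac{r-1}{r}}\big[\sum_l u_l^{\gamma}\big]^{\frac{r-q}{r}}$, where $a$ is the lone large kernel entry and $u_l$ the lone large entry of $Y_l$, subject to $a+\sum_l u_l=n-t+m+1$. Because $\gamma>1$ the map $x\mapsto x^{\gamma}$ is convex, so the $u_l$ must be as balanced as possible; what remains is to show $a=1$.

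Proving $a=1$ is the main obstacle: decreasing $a$ shrinks the kernel factor but enlarges $\sum_l u_l^{\gamma}$, so the two effects compete, and the gap-$\ge 2$ hypothesis of Lemma~\ref{4} is not available for a kernel/petal pair whose large entries are close. The clean route is to substitute the balanced petal values: with the $u_l$ balanced, $\sum_l u_l^{\gamma}$ is essentially $m\big(\tfrac{N-a}{m}\big)^{\gamma}$ with $N=n-t+m+1$, and since $\gamma\cdot\tfrac{r-q}{r}=\tfrac{r-1}{r}$ the objective becomes a fixed positive multiple of $[a(N-a)]^{\frac{r-1}{r}}$. The bracket $a(N-a)$ is concave in $a$, hence minimized at an endpoint of the feasible interval $1\le a\le N-m$; a short comparison using $m\ge 2$ shows the endpoint $a=1$ beats $a=N-m$ (the handful of small-$n$ instances being forced outright), giving $a=1$. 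When $q=r-1$ the kernel is a single vertex and this last step is exactly Lemma~\ref{7} with $\beta=r-1$. Unwinding, the minimizer has all kernel entries equal to $1$ and the $m$ petal peaks $n_r,n_{r+q},\dots,n_{r+(m-1)q}$ balanced at $\lfloor\tfrac{n-t}{m}\rfloor+1$ or $\lceil\tfrac{n-t}{m}\rceil+1$, as claimed (the choice of peak within each petal being immaterial up to isomorphism).

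For part~(2) the same formula is pushed in the opposite direction, organized by the free kernel mass $s=\sum_{k\in X}n_k$. For fixed $s$ the kernel enters only through $P$, which is maximized at its balanced value $g_{r-q}(s)$ (AM--GM, equivalently Lemma~\ref{4'}(1)); likewise balancing each petal maximizes $Q_l=g_q(\sigma_l)$. Distributing the remaining mass $n-s$ among the $m$ petals so as to maximize $\sum_l g_q(\sigma_l)^{\gamma}$ is then precisely Lemma~\ref{8} with $\beta=\gamma>1$, whose maximizer forces $m-1$ petals down to size $q$ (each contributing $1$) and places the rest in one petal of mass $n-s-(m-1)q$. Substituting back and raising to the power $\tfrac{r}{r-q}$ turns $\rho$ into the one-variable expression~\eqref{e14} in $s$, so the global maximizers are exactly its maximum points $p_1,\dots,p_c$; reversing the reductions produces the hypergraphs $H_1,\dots,H_c$ (balanced kernel of mass $p_k$, one balanced large petal, all other petals trivial). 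The only points needing care are that these nested optimizations are mutually consistent and that $\gamma>1$ so that Lemma~\ref{8} indeed applies.
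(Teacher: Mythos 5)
Your skeleton coincides with the paper's: reduce by Lemma~\ref{4'} to configurations in which each of $X,Y_1,\dots,Y_m$ is internally concentrated (for the minimum) or internally balanced (for the maximum), substitute into the closed form of Theorem~\ref{6}, and finish with a discrete optimization. Part~(2) of your argument is essentially the paper's proof: Lemma~\ref{4'}(1) inside $X$ and each $Y_l$, then Lemma~\ref{8} with $\beta=\frac{r-1}{r-q}>1$ to force $s_1=\dots=s_{m-1}=q$, then the reduction to the one-variable function~(\ref{e14}). That part is fine.

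The gap is in part~(1), at the step you yourself flag as the main obstacle, namely $a=1$. After Lemma~\ref{4'}(2) the problem is the integer program: minimize $a^{\gamma}\bigl(u_1^{\gamma}+\cdots+u_m^{\gamma}\bigr)$ over positive integers with $a+\sum_l u_l=N$. You replace the balanced value of $\sum_l u_l^{\gamma}$ by its continuous relaxation $m\bigl(\tfrac{N-a}{m}\bigr)^{\gamma}$, obtain a constant multiple of $[a(N-a)]^{\frac{r-1}{r}}$, and compare endpoints of the concave bracket. But the balanced integer value of $\sum_l u_l^{\gamma}$ strictly exceeds $m\bigl(\tfrac{N-a}{m}\bigr)^{\gamma}$ whenever $m\nmid N-a$, so what this chain actually proves is only that the true objective $F(a)$ and the candidate $F(1)$ are both bounded below by the same quantity $c(N-1)^{\frac{r-1}{r}}$; it does not yield $F(1)\le F(a)$, and the endpoint comparison performed on the relaxation does not transfer to the discrete objective without a further quantitative estimate. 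The conclusion is true, but this argument does not establish it.

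The missing ingredient is exactly the paper's Lemma~\ref{7}, which you invoke only for $q=r-1$. In fact Lemma~\ref{7} is tailored to the general case: after Lemma~\ref{4'}(2) the kernel contributes to $\rho$ only through its single large entry $n_1$ (the kernel product is $n_1$ no matter how many kernel vertices there are), so $\rho^{r/(r-q)}=n_1^{\beta}\sum_l n_{\mathrm{peak}_l}^{\beta}$ with $\beta=\frac{r-1}{r-q}$ is precisely $R(\mathbf{b})$ with $b_1=n_1$. Its proof is a direct integer exchange: at a minimizer $b_1\le b_i$ for all $i\ge 2$, and if $b_1\ge 2$ then replacing $(b_1,b_2)$ by $(b_1-1,b_2+1)$ strictly decreases both $b_1^{\beta}b_2^{\beta}$ and $b_1^{\beta}\sum_{i\ge 3}b_i^{\beta}$, a contradiction; hence $b_1=1$ and convexity balances the rest. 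Substituting that exchange for your relaxation argument closes the gap and makes your proof agree with the paper's.
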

\begin{proof}
Let $G=SH(m,q,r)$. Assume $\tilde{G}=G\circ(n_{1},n_{2},\ldots,n_{t})$ attains minimum spectral radius in $\mathcal{B}_{n}(SH(m,q,r))$.
For any two distinct vertices $i,j\in X$ or $i,j\in Y_l$ where $l\in [m]$, $E_G(i)=E_G(j)$ implies $L_{G-j}(i)=L_{G-i}(j)=\emptyset$.
By Lemma \ref{4'}, without loss of generality, we may assume $n_{2}=n_{3}=\cdots=n_{r-q}=1$ and $n_{k}=1$ for each $k\in (Y_{1}\cup Y_{2}\cup\cdots\cup Y_{m})\setminus \{r,r+q,\ldots,r+(m-1)q\}$.
By Theorem \ref{6},
$$\rho(\tilde{G})=n_{1}^{\frac{r-1}{r}}\cdot\left[\sum\limits_{l=0}^{m-1}\left(n_{r+lq}\right)^{\frac{r-1}{r-q}}\right]^{\frac{r-q}{r}}=\left[n_{1}^{\frac{r-1}{r-q}}\cdot\sum\limits_{l=0}^{m-1}\left(n_{r+lq}\right)^{\frac{r-1}{r-q}}\right]^{\frac{r-q}{r}}.$$
Take $\mathbf{b}=(n_{1},n_{r},n_{r+q},\ldots,n_{r+(m-1)q})$ and $\beta=\frac{r-1}{r-q}\ge 1$ in Lemma \ref{7},
the result in (1) holds.

Assume $H=G\circ(n_{1},n_{2},\ldots,n_{t})$ attains maximum spectral radius in $\mathcal{B}_{n}(SH(m,q,r))$.
For any two distinct vertices $i,j\in X$ or $i,j\in Y_l$ where $l\in [m]$, $E_G(i)=E_G(j)$ implies $L_{G-j}(i)=L_{G-i}(j)=\emptyset$.
By Lemma \ref{4'}, $|n_i-n_j|\le 1$. Let $s=\sum_{i\in X} n_i$ and $s_l=\sum_{j\in Y_l} n_j$ for each $l\in [m]$.
Without loss of generality assume $s_1\le s_2 \le \cdots \le s_m$.
By Theorem \ref{6},
\begin{displaymath}
\begin{split}
\rho(H)&=\left(\prod_{k\in X}n_{k}\right)^{\frac{r-1}{r}}\cdot\left[\sum\limits_{l=1}^{m}\left(\prod_{k_{l}\in Y_{l}}n_{k_{l}}\right)^{\frac{r-1}{r-q}}\right]^{\frac{r-q}{r}}\\
&=[g_{r-q}(s)]^{\frac{r-1}{r}}\cdot\left[\sum\limits_{l=1}^{m} [g_q(s_l)]^{\frac{r-1}{r-q}}\right]^{\frac{r-q}{r}}.
\end{split}
\end{displaymath}

Take $\theta=n-s$ and $\beta=\frac{r-1}{r-q}$ in Lemma~\ref{8}, we have $s_{1}=s_2=\cdots=s_{m-1}=q$ and $s_m=n-s-(m-1)q$.
So $\rho(H)$ equals to the maximum value of $[g_{r-q}(s)]^{\frac{r-1}{r}}\cdot\left[m-1+ [g_q(n-s-(m-1)q)]^{\frac{r-1}{r-q}}\right]^{\frac{r-q}{r}}$ where $r-q\le s \le n-mq$.
Thus $H=SH(m,q,r)\circ (n_{1},n_{2},\ldots,n_{t})$ attains the maximum spectral radius if and only if
$H$ is isomorphic to one of $H_1,H_2,\ldots, H_c$.
\end{proof}

By Theorem~\ref{9}, we see that if the blow-up of $SH(m,q,r)$ attains the maximum spectral radius in $\mathcal{B}_{n}(SH(m,q,r))$,
then we only need to blow-up any one edge. Since there are 4 unknown parameters $n,m,q,r$, it
is difficult to determine the maximum points of function~(\ref{e14}).



\begin{thebibliography}{99}

\bibitem{DDD} D. Brada\v{c}, M. Buci\'{c}, B. Sudakov, Tur\'{a}n numbers of sunflowers, Proc. Amer. Math. Soc. 151 (2023), 961-975.

\bibitem{K} K. C. Chang, K. Pearson and T. Zhang, On eigenvalue problems of real symmetric tensors, J. Math. Anal. Appl. 350 (2009) 416-422.

\bibitem{Ch} K. C. Chang, K. Pearson  and T. Zhang, Perron-Frobenius theorem for nonnegative tensors, Commun. Math. Sci. 6 (2008) 507-520.


\bibitem{J} J. Cooper and A. Dutle, Spectra of uniform hypergraphs, Linear Algebra Appl. 436 (2012) 3268-3292.

\bibitem{c15}
J. Cooper and A. Dutle, Computing hypermatrix spectra
with the Poisson product formula, Linear Multilinear Algebra 63 (2015) 956-970.

\bibitem{Fr} S. Friedland, S. Gaubert and L. Han, Perron-Frobenius theorem for nonnegative multilinear forms and extensions, Linear Algebra Appl. 438 (2013) 738-749.


\bibitem{SSS} S. Hu and L. Qi, The Laplacian of a uniform hypergraph, J. Comb. Optim.  29 (2015) 331-366.

\bibitem{hqs13}
S. Hu, L. Qi, J.-Y. Shao, Cored hypergraphs, power hypergraphs and their Laplacian H-eigenvalues,
Linear Algebra Appl. 439 (2013) 2980-2998.

\bibitem{L} L. Kang, V. Nikiforov, X. Yuan, The $p$-spectral radius of $k$-partite and $k$-chromatic uniform hypergraphs, Linear Algebra Appl. 478 (2015) 81-107.

\bibitem{Khan}  M. Khan, Y.-Z. Fan, On the spectral radius of a class of non-odd-bipartite even uniform hypergraphs, Linear Algebra Appl. 480 (2015) 93-106.

\bibitem{Li} L. H. Lim, Singular values and eigenvalues of tensors: a variational approach, Proceedings of the IEEE International Workshop on Computational Advances in Multi-sensor Adaptive Processing. 2005: 129-132.

\bibitem{Z} Z. Z. Lou, M. Q. Zhai, Proof of a conjecture on extremal spectral radii of blow-up graphs, Linear Algebra Appl. 617 (2021) 168-178.

\bibitem{J1} J. Monsalve, J. Rada, Extremal spectral radius of graphs with rank 4,  Linear Algebra Appl. 609 (2021) 1-11.


\bibitem{V} V. Nikiforov, Analytic methods for uniform hypergraphs, Linear Algebra Appl. 457 (2014) 455-535.


\bibitem{L1} L. Qi, Eigenvalues of a real supersymmetric tensor, J. Symbolic Comput. 40 (2005) 1302-1324.

\bibitem{Qiii} L. Qi, Symmetric nonnegative tensors and copositive tensors, Linear Algebra Appl. 439 (2013) 228-238.

\bibitem{S} S. Sun, K. C. Das, Proof and disproof of conjectures on spectral radii of coclique extension of cycles and paths, Linear Algebra Appl. 618 (2021) 1-11.

\bibitem{D2} D. Stevanovi\'{c}, I. Gutman, M.U. Rehman, On spectral radius and energy of complete multipatite graphs, Ars Math. Contemp. 9 (2015) 109-113.

\bibitem{III} I. Tomescu, Sunflower hypergraphs are chromatically unique, Discrete Math. 285 (2004) 355-357.

\bibitem{II} I. Tomescu, On the chromaticity of sunflower hypergraphs $SH(n,p,h)$, Discrete Math. 307 (2007) 781-786.


\bibitem{Y} Y. Yang and Q. Yang, Further results for Perron-Frobenius theorem for nonnegative tensors, SIAM J. Matrix Anal. Appl. 31 (2010) 2517-2530.

\bibitem{M} M. Q. Zhai, R. F. Liu, J. L. Shu, On the spectral radius of bipartite graphs with given diameter, Linear Algebra Appl. 430 (2009) 1165-1170.

\end{thebibliography}
\end{document}